\newtheorem{theo}{Theorem}
\newtheorem{lem}[theo]{Lemma}
\makeatletter \@addtoreset{equation}{section}
\newcommand{\bN} { {\mathbb{N}}}
\newcommand{\bZ} { {\mathbb{Z}}}
\def\qed{\hfill \rule{4pt}{7pt}}
\author{}
\title{}
\begin{document}
\begin{center}
	{\large \bf Congruences involving Delannoy numbers
		and Schröder numbers}
\end{center}

\begin{center}
	{  Chen-Bo Jia}$^{1}$  and{  Jia-Qing Huang}$^{2}$

	$^1$School of Mathematical Sciences\\
	Tiangong University \\
	Tianjin 300387, P.R. China \\
	2230111379@tiangong.edu.cn \\[10pt]
	
	$^2$School of Mathematics\\
	Nanjing University \\
	Nanjing 210093, P.R. China \\
502022210016@smail.nju.edu.cn \\[10pt]
\end{center}

\vskip 6mm \noindent {\bf Abstract.}
The central Delannoy numbers $D_n=\sum_{k=0}^{n}\binom{n}{k}\binom{n+k}{k}$ and the little Schröder number $s_n=\sum_{k=1}^{n}\frac{1}{n}\binom{n}{k}\binom{n}{k-1}2^{n-k}$ are important quantities. In this paper, we confirm
\[\frac{2}{3n(n+1)}\sum_{k=1}^n (-1)^{n-k}k^2D_kD_{k-1}\ \text{and}\ \ \frac 1n\sum_{k=1}^n (-1)^{n-k}(4k^2+2k-1)D_{k-1}s_k\]are positive odd integers for all $n=1,2,3,\cdots$. We also show that for any prime number $p>3$,
\[\sum_{k=1}^{p-1} (-1)^kk^2D_kD_{k-1}\ \equiv\ -\frac56p \pmod{p^2}\]
and
\[\sum_{k=1}^p (-1)^k(4k^2+2k-1)D_{k-1}s_k\ \equiv\ -4p \pmod{p^2}\text{.}\]
 Moreover, define
  \begin{equation*}
s_n(x)=\sum_{k=1}^{n}\frac{1}{n}\binom{n}{k}\binom{n}{k-1}x^{k-1}(x+1)^{n-k},
 \end{equation*}
 for any $n\in\bZ^+$ is even we have  
\begin{equation*}
	\frac{4}{n(n+1)(n+2)(1+2x)^3}\sum_{k=1}^{n}k(k+1)(k+2)s_k(x)s_{k+1}(x)\in\bZ[x].
\end{equation*}

\noindent {\bf Keywords}:  congruence; Schröder number; central Delannoy number; central trinomial coefficient

\section{Introduction}
The $n$-th central Delannoy numbers are defined by
\begin{equation*}
	D_{n}=\sum\limits_{k=0}^{n}\binom{n}{k}\binom{n+k}{k}=\sum_{k=0}^{n}\binom{n+k}{2k}\binom{2k}{k} \ \text{for}\ n\in\bN.
\end{equation*}
Such numbers also arise in many enumeration problems in combinatorics\cite{The OEIS Foundation Inc}; for example, $D_n$ is the number of lattice paths from the point (0, 0) to $(n, n)$ with steps $(1, 0), (0, 1)$ and (1, 1).

For $b,c\in\bZ$ and $n\in\bN$, the generalized central trinomial coefficients $T_{n}(b,c)$\cite{Sun2014} is given by
\begin{equation}\label{def:T_{n}}
	T_n(b,c)=\sum_{k=0}^{\lfloor\frac{n}{2} \rfloor}\binom{n}{2k}\binom{2k}{k}b^{n-2k}c^{k}.
\end{equation}
In 2014\cite{Sun2014}, Z.-W. Sun proved 
\begin{equation*}
	\sum_{k=0}^{n-1}(2k+1)T_k(b,c)^2(b^2-4c)^{n-1-k}\equiv 0 \pmod {n^2}
\end{equation*}
for each $n=1,2,3,\cdots$.
Also, $T_{n}(3,2)$ coincides with the central Delannoy number in combinatorics\cite{Sun2018}.

For $n\in\bN$, the $n$-th large Schröder number $S_n$ and little Schröder number $s_n$ are defined
by
\begin{equation*}
	S_n=\sum_{k=0}^{n}\binom{n}{k}\binom{n+k}{k}\frac{1}{k+1}\quad \text{and} \quad 	s_{n}=\sum\limits_{k=1}^{n}N(n,k)2^{n-k}
\end{equation*}
with the Narayana number $N(n,k)$ defined by
\begin{equation*}
	N(n,k)=\frac{1}{n}\binom{n}{k}\binom{n}{k-1}\in\bZ.
\end{equation*}
It is well-known that $S_n = 2s_n$ for $n\in\bZ^+$. There are many combinatorial objects that are counted by $S_n$ or $s_n$. One can consult
items A006318 and A001003 in the OEIS \cite{The OEIS Foundation Inc} for some of their classical combinatorial interpretations.

 Z.-W. Sun \cite{ Sun2012, Sun2014, Sun2018} defined Delannoy polynomials $D_{n}(x),$  the large Schröder
 polynomials $S_n(x)$ and the little Schröder polynomials $s_n(x)$ as
 \begin{equation*}
 	D_{n}(x)=\sum\limits_{k=0}^{n}\binom{n}{k}\binom{n+k}{k}x^k,
 \end{equation*}
 \begin{equation*}
 	S_n(x)=\sum_{k=0}^{n}\binom{n}{k}\binom{n+k}{k}\frac{x^k}{k+1}\quad \text{and} \quad 	s_{n}(x)=\sum\limits_{k=1}^{n}N(n,k)x^{k-1}(x+1)^{n-k}.
 \end{equation*}
Clearly, $D_{n}(1)$, $S_n(1)$ and $s_n(1)$ reduce to the $n$-th central Delannoy numbers, the $n$-th large and the little Schröder numbers respectively. In 2018, Z.-W. Sun \cite[lemma 2.1]{Sun2018} proved that 
\begin{equation}\label{eq: D_{n+1}(x)-D_{n-1}(x)}
	D_{n+1}(x)-D_{n-1}(x)=2x(2n+1)S_{n}(x)
\end{equation}
and
\begin{equation}\label{eq: (x+1)s_{n}(x)}
   (x+1)s_{n}(x)=S_{n}(x),
\end{equation}
for any $n\in\bZ^+$. Sun also proved that
\begin{align*}
	\frac{1}{n}\sum_{k=0}^{n-1}D_{k}(x)s_{k+1}(x)\in\bZ[x(x+1)] \quad\text{for all $n=1,2,3,...$}
\end{align*}
 Arithmetic properties of Delannoy numbers and Schröder numbers are also extensively investigated;
 see \cite{Cao2017,Guo2012a,Guo2012b,Liu2016,Liu2018,Sun2011,Sun2012,Sun2014,Sun2018,Jia2023,Wang2023a,Wang 2023b}. For exmaple, in \cite{Sun2011}, Sun showed that for any odd prime $p$,
 \begin{equation*}
 	\sum_{k=1}^{p-1}\frac{D_k}{k^2}\equiv 2\Bigl(\frac{-1}{p}\Bigr)E_{p-3} \pmod {p} \quad\text{and}\quad \sum_{k=1}^{p-1}\frac{S_k}{6^k}\equiv 0 \pmod {p},
 \end{equation*}
 where $(\frac{\bullet}{p})$ denotes the Legendre symbol and $E_n$ is the $n$-th Euler number. In \cite{Guo2012a}, Guo and Zeng proved 
 \begin{equation*}
 		\sum_{k=0}^{n-1}\varepsilon^k(2k+1)k^r(k+1)^rD_{k}\equiv\sum_{k=0}^{n-1}\varepsilon^k(2k+1)^{2r+1}D_{k}\equiv 0 \pmod {n}, 
 \end{equation*} 
 where $n\geq1$, $r\geq0$, and $\varepsilon=\pm 1$. In \cite{Liu2016}, Liu proved that 
 \begin{equation*}
 	\sum_{k=1}^{p-1}D_kS_k \equiv 2p^3B_{p-3}-2pH^{*}_{p-1} \pmod {p^4} 
 \end{equation*}
 for $p>3$ be a prime, where $B_n$ is the $n$-th Bernoulli number and these $H^{*}_n$ are the alternating harmonic numbers given by $H^{*}_n=\sum_{k=1}^{n}\frac{(-1)^k}{k}$. In \cite{Wang 2023b}, Wang and Zhong proved that for each $r\in\bN$ and prime $p>3$, there is a $p$-adic integer $c_r$ only depending on $r$ such that
 \begin{equation*}
 	\sum_{k=0}^{p-1}(2k+1)^{2r}D_{k} \equiv c_r(\frac{-1}{p}) \pmod {p}.
 \end{equation*}

 In this paper, we confirm some conjectures of Z.-W. Sun \cite{Sun2022}.
 \begin{theo}[Conjecture of Sun\cite{Sun2022}]\label{th:2022sun Remark 5.3(1)}
For any  $n\in\bZ^{+}$,
 	\begin{equation*}
 		\frac{2}{3n(n+1)}\sum_{k=1}^{n}(-1)^{n-k}k^{2}D_{k}D_{k-1}\quad\text{and}\quad	\frac{1}{n}\sum_{k=1}^{n}(-1)^{n-k}(4k^{2}+2k-1)D_{k-1}s_{k}
 	\end{equation*}
 	are positive odd integers.
 \end{theo}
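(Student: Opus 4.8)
The plan is to prove both statements by establishing, for each sum, a closed-form evaluation in terms of a single Delannoy/Schröder-type product, from which positivity, integrality, and the odd-parity claim all follow. I would start from the three-term recurrences. For the central Delannoy polynomials one has $(n+1)D_{n+1}(x) = (2n+1)(2x+1)D_n(x) - n D_{n-1}(x)$, and combining this with \eqref{eq: D_{n+1}(x)-D_{n-1}(x)} and \eqref{eq: (x+1)s_{n}(x)} gives a linear relation among $D_{n+1}$, $D_n$, $D_{n-1}$ and $s_n$. Specializing to $x=1$, the key idea is to look for a telescoping identity: I expect that the partial sum $\sum_{k=1}^{n}(-1)^{n-k}k^2 D_k D_{k-1}$ equals $\tfrac{3}{2}\bigl(a_n D_n D_{n-1} + \text{lower-order}\bigr)$ for an explicit polynomial $a_n$ in $n$ of degree $2$, and similarly for the second sum. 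Concretely, one guesses an ansatz $F(n) = (\alpha n^2 + \beta n + \gamma) D_n D_{n-1}$ (plus possibly a $D_{n-1}^2$ term) and checks that $F(n) - (-1)\cdot F(n-1) = \tfrac32\cdot (-1)^{?}\, n^2 D_n D_{n-1}$ by expanding everything through the recurrence; matching coefficients of the independent monomials $D_n D_{n-1}$, $D_{n-1}^2$ pins down $\alpha,\beta,\gamma$. This reduces the first claim to showing $\tfrac{2}{3n(n+1)}F(n)$ is a positive odd integer, i.e. that $3n(n+1) \mid 2 F(n)$ and that the resulting quotient is odd.

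For the divisibility and odd-parity part I would argue $p$-adically, prime by prime. Positivity is immediate once the closed form is in hand, since $D_k, s_k > 0$ and the leading coefficient comes out positive. For integrality after dividing by $3n(n+1)$ (resp.\ by $n$), I would reduce mod small primes: the factor $n$ is handled exactly as in Guo--Zeng's congruence $\sum_{k=0}^{n-1}\varepsilon^k(2k+1)k^r(k+1)^r D_k \equiv 0 \pmod n$ cited in the introduction, which has the same flavour, while the extra factors $3$ and $n+1$ require evaluating the sum mod $3$ and using the index shift $k \mapsto n+1-k$ together with known values $D_k \bmod 3$ (which are eventually periodic). The odd-parity statement is the delicate point: one must show $\tfrac{2}{3n(n+1)}F(n) \equiv 1 \pmod 2$, equivalently $F(n) \equiv \tfrac32 n(n+1) \pmod{3n(n+1)}$ refined at the prime $2$. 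Here I would use that $D_n$ is always odd (clear from $D_n = \sum_k \binom{n+k}{2k}\binom{2k}{k}$ reduced mod $2$ via Lucas, or from the recurrence) and that $s_n$ has a known $2$-adic behaviour, so each product $D_k D_{k-1}$ is odd and the alternating sum of $n$ odd terms weighted by $k^2$ has a controlled parity; combining with the $2$-adic valuation of $3n(n+1)$, which is $v_2(n)+v_2(n+1) = v_2(n(n+1))$ and at least $1$, forces the quotient to be odd.

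The main obstacle I anticipate is finding the correct telescoping closed form $F(n)$ — in particular whether a pure $D_n D_{n-1}$ ansatz suffices or whether one genuinely needs the mixed basis $\{D_n D_{n-1}, D_{n-1}^2\}$ (and analogously $\{D_k s_k, D_{k-1} s_k, s_k^2\}$ for the second sum, using \eqref{eq: (x+1)s_{n}(x)} to trade $S_k$ for $2 s_k$). Once the identity is guessed, verifying it is a finite computation with the recurrences, but getting the ansatz wrong makes the coefficient-matching inconsistent, so some care (or a short computer-algebra check) is needed up front. A secondary obstacle is the parity argument: showing the quotient is odd rather than merely an integer requires pinning the $2$-adic valuation of $F(n)$ to be exactly $v_2(3n(n+1)) - 1$, which I would do by an induction on $n$ tracking $F(n) \bmod 2^{v_2(n(n+1))+1}$, seeded by direct computation for small $n$ and propagated through the recurrence. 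I expect the second sum, involving $(4k^2+2k-1)D_{k-1}s_k$, to behave completely analogously — the weight $4k^2+2k-1$ is exactly what makes the telescoping work with the $S_n$–$D_n$ relation — so the two cases can be packaged in parallel.
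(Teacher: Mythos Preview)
Your telescoping strategy matches the paper's first proof in spirit: the paper sets $a_k=(-1)^{n-k}D_k$, $b_k=D_{k-1}$ and finds $k^2 a_k b_k=\Delta_k F_k$ with
\[
F_k=\tfrac1{72}(k-1)a_{k-1}b_{k-1}+\tfrac1{12}k(1-k)a_kb_{k-1}+\tfrac1{24}(2k^2-2k+1)a_{k-1}b_k+\tfrac1{72}ka_kb_k,
\]
so your ansatz $\{D_nD_{n-1},D_{n-1}^2\}$ is too narrow --- all four products $a_{k-1}b_{k-1},a_kb_{k-1},a_{k-1}b_k,a_kb_k$ are needed. After simplifying via the recurrence the paper arrives at
\[
\frac{2}{3n(n+1)}\sum_{k=1}^n(-1)^{n-k}k^2D_kD_{k-1}=\frac{D_n(D_{n+1}-3D_n)+D_{n-1}(D_n+3D_{n+1})}{54},
\]
and analogously $\frac1n\sum_{k=1}^n(-1)^{n-k}(4k^2+2k-1)D_{k-1}s_k=\frac{(n+1)D_ns_n+(n+2)D_{n-1}s_{n+1}}{3}$.

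The genuine gap in your proposal is the parity/divisibility step. The tools you list --- Guo--Zeng congruences for single $D_k$, the index shift $k\mapsto n+1-k$, periodicity of $D_k\bmod 3$, and a $2$-adic induction on $F(n)$ --- do not obviously apply: Guo--Zeng's result concerns linear sums of $D_k$, not products $D_kD_{k-1}$; there is no evident symmetry under the index shift; and tracking \emph{exact} $2$-adic valuation through the Delannoy recurrence is delicate since the leading coefficient $n+1$ is not a $2$-adic unit in general. The paper instead exploits the representation $D_n=T_n(3,2)=\sum_k\binom{n}{2k}\binom{2k}{k}3^{n-2k}2^k$: from this one reads off directly that $(D_{n+1}-3D_n)/2$ is even and $(D_n+3D_{n+1})/2$ is odd, and reducing the numerator $\bmod\ 27$ (where only the top one or two terms of the $T_n(3,2)$ sum survive) gives divisibility by $27$ via a short parity-of-$n$ case split. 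For the second closed form, the paper uses $3\mid s_{2m}$ (from $s_{2m}(x)/(1+2x)\in\bZ[x]$) and $3\mid D_{2m+1}$ (from the $T_n(3,2)$ formula) to get divisibility by $3$. None of this appears in your outline, and without it the oddness claim is not established.

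The paper also gives a second, independent proof that sidesteps these issues: using identities of Sun it rewrites the two quotients as
\[
\mathcal A_n=\sum_{j=0}^{n-1}\binom{2j}j\binom{n+j+1}j\binom{n-1}j2^j,\qquad
\mathcal B_n=\sum_{j=0}^{n-1}C_j\,w(n,j+1)\bigl((4j+2)n+4j+3\bigr)2^j,
\]
which are visibly positive integers, and odd because only the $j=0$ term is odd. If the direct divisibility analysis of the closed form resists you, this route is cleaner.
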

On the basis of Theorem \ref{th:2022sun Remark 5.3(1)}, we prove the following observation by Z.-W. Sun.
 \begin{theo}\label{th:2022sun Remark 5.3(2)}
	For any prime $p\textgreater3,$ we have 
	 \[\sum_{k=1}^{p-1} (-1)^kk^2D_kD_{k-1}\ \equiv\ -\frac56p\pmod{p^2}\text{,}\]
	and
	\[\sum_{k=1}^p (-1)^k(4k^2+2k-1)D_{k-1}s_k\ \equiv\ -4p \pmod{p^2}\text{.}\]
\end{theo}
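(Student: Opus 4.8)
The plan is to evaluate the two sums in closed form and then reduce modulo $p^{2}$ using a few standard congruences for central Delannoy numbers. First I would establish, by induction on $n$ using the Delannoy recurrence $(n+1)D_{n+1}=3(2n+1)D_{n}-nD_{n-1}$ (and, for the second identity, the relation $s_{k}=\frac{3D_{k}-D_{k-1}}{4(k+1)}$, which follows from \eqref{eq: D_{n+1}(x)-D_{n-1}(x)} and \eqref{eq: (x+1)s_{n}(x)} at $x=1$ together with the same recurrence), the formulas
\begin{align*}
\sum_{k=1}^{n}(-1)^{k}k^{2}D_{k}D_{k-1}&=\frac{(-1)^{n}n}{36}\bigl(3nD_{n}^{2}+(18n+10)D_{n}D_{n-1}-3nD_{n-1}^{2}\bigr),\\
\sum_{k=1}^{n}(-1)^{k}(4k^{2}+2k-1)D_{k-1}s_{k}&=\frac{(-1)^{n}n}{12(n+1)}\bigl(3(n+1)D_{n}^{2}+(16n+7)D_{n}D_{n-1}-3nD_{n-1}^{2}\bigr).
\end{align*}
These are the identities underlying Theorem~\ref{th:2022sun Remark 5.3(1)} (dividing the right sides by $\tfrac32 n(n+1)$, resp.\ $n$, yields the positive odd integers $a_{n},b_{n}$ there). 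Specializing to $n=p-1$ and $n=p$ (with $(-1)^{p-1}=1$, $(-1)^{p}=-1$) turns the theorem into the evaluation of explicit combinations of $D_{p-2},D_{p-1},D_{p}$ modulo $p^{2}$.

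Next I would record the needed arithmetic of Delannoy numbers. For $m\le p-1$, in $D_{m}=\sum_{k=0}^{m}\binom{m}{k}\binom{m+k}{k}$ one has $p\mid\binom{m+k}{k}$ whenever $m+k\ge p$, so only the low-order terms survive modulo $p$; this gives $D_{p-1}\equiv1$ and $D_{p-2}\equiv3\pmod p$, and similarly $D_{p}\equiv\binom{2p}{p}+1\equiv3\pmod p$ (using $p\mid\binom{p}{k}$ for $0<k<p$ and $\binom{2p}{p}\equiv2$). Inserting $D_{p-1}\equiv1$, $D_{p-2}\equiv3$, $D_{p}\equiv3\pmod p$ into $pD_{p}=3(2p-1)D_{p-1}-(p-1)D_{p-2}$ read modulo $p^{2}$ then upgrades the first two to the single relation $D_{p-2}\equiv3D_{p-1}\pmod{p^{2}}$.

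Now the first congruence follows: at $n=p-1$ the bracket equals $3(p-1)(D_{p-1}^{2}-D_{p-2}^{2})+(18p-8)D_{p-1}D_{p-2}$, and $D_{p-2}\equiv3D_{p-1}\pmod{p^{2}}$ collapses it to $30p\,D_{p-1}^{2}\pmod{p^{2}}$; since $p(p-1)\equiv-p$ and $D_{p-1}^{2}\equiv1\pmod p$, the sum is $\frac{p-1}{36}\cdot30p\,D_{p-1}^{2}\equiv-\frac56 p\pmod{p^{2}}$. For the second, at $n=p$ the sum equals $-p\,m$ with $m=\frac{1}{12(p+1)}\bigl(3(p+1)D_{p}^{2}+(16p+7)D_{p}D_{p-1}-3pD_{p-1}^{2}\bigr)$; the sum being an integer and $\gcd(p,12(p+1))=1$ force $m\in\bZ$, and $D_{p}\equiv3$, $D_{p-1}\equiv1\pmod p$ give $12m\equiv 27+21\equiv48$, i.e.\ $m\equiv4\pmod p$, hence $-p\,m\equiv-4p\pmod{p^{2}}$.

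The main obstacle will be establishing the two closed-form identities — routine induction / creative telescoping, but the computations are somewhat lengthy; once they are in hand the rest is a short mod-$p^{2}$ calculation, the only delicate points being that $36$, $12(p+1)$ and $(p-1)^{2}$ are units modulo $p^{2}$ for $p>3$ and that the common factor $p$ may be cancelled when passing from a congruence modulo $p^{2}$ to one modulo $p$.
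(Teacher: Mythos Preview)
Your proposal is correct and follows essentially the same strategy as the paper's first proof (Section~2): obtain a closed form for each partial sum by telescoping/induction, evaluate at $n=p-1$ (resp.\ $n=p$), and reduce using the values of $D_{p-1},D_p$ (and $D_{p-2}$) modulo $p$. The only packaging differences are that the paper writes the first closed form in terms of $D_n,D_{n+1}$ (so it needs only $D_{p-1},D_p\pmod p$ and not your refined relation $D_{p-2}\equiv 3D_{p-1}\pmod{p^{2}}$), and for the second sum the paper keeps the Schr\"oder numbers in the closed form $\tfrac{1}{n}\sum=(\,(n+1)D_ns_n+(n+2)D_{n-1}s_{n+1}\,)/3$ and invokes $s_p\equiv2,\ s_{p+1}\equiv3\pmod p$, whereas you eliminate $s_k$ via $s_k=\tfrac{3D_k-D_{k-1}}{4(k+1)}$ and work entirely with Delannoy numbers---both routes are equally short.
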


\begin{theo}[Conjecture of Sun\cite{Sun2022}]\label{th:2022sun}
	Let $n\in\bZ^+$ is even. Then
	\begin{equation*}
		\frac{4}{n(n+1)(n+2)(1+2x)^3}\sum_{k=1}^{n}k(k+1)(k+2)s_k(x)s_{k+1}(x)\in\bZ[x].
	\end{equation*}
\end{theo}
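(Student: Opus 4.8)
The plan is to reduce the statement about the little Schröder polynomials to a statement about the large Schröder polynomials $S_n(x)$ and then to the Delannoy polynomials $D_n(x)$, where I have the three‑term relation \eqref{eq: D_{n+1}(x)-D_{n-1}(x)} available. Using \eqref{eq: (x+1)s_{n}(x)}, $s_k(x)=S_k(x)/(x+1)$, so $k(k+1)(k+2)s_k(x)s_{k+1}(x)=k(k+1)(k+2)S_k(x)S_{k+1}(x)/(x+1)^2$. Summing, I must show
\begin{equation*}
\frac{4}{n(n+1)(n+2)(1+2x)^3(x+1)^2}\sum_{k=1}^{n}k(k+1)(k+2)S_k(x)S_{k+1}(x)\in\bZ[x]
\end{equation*}
for even $n$. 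Now substitute $S_k(x)=\dfrac{D_{k+1}(x)-D_{k-1}(x)}{2x(2k+1)}$. The product $S_k(x)S_{k+1}(x)$ acquires a denominator $4x^2(2k+1)(2k+3)$, and the factor $k(k+1)(k+2)$ in the numerator should be exploited to cancel against $(2k+1)(2k+3)$ after telescoping — this is exactly the kind of manipulation behind Sun's identity $\frac1n\sum_{k=0}^{n-1}D_k(x)s_{k+1}(x)\in\bZ[x(x+1)]$ quoted above, so I expect a similar Abel‑summation / telescoping identity to be the engine.

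Concretely, I would look for an explicit polynomial identity of the form
\begin{equation*}
\sum_{k=1}^{n}k(k+1)(k+2)s_k(x)s_{k+1}(x)=\frac{n(n+1)(n+2)}{4}\bigl(\alpha_n(x)+(1+2x)\beta_n(x)+\cdots\bigr),
\end{equation*}
i.e.\ guess (from small cases $n=2,4,6$, computed symbolically) a closed form for the left side as $\tfrac14 n(n+1)(n+2)(1+2x)^3$ times a polynomial, and then prove that closed form by induction on $n$ in steps of $2$. The induction step requires expressing $s_{n+1}(x)$ and $s_{n+2}(x)$ via the recurrences for Schröder/Delannoy polynomials (the three‑term recurrence $(n+2)S_{n+1}(x)=3(2n+1)(\cdots)$‑type relation that follows from \eqref{eq: D_{n+1}(x)-D_{n-1}(x)} together with the known recurrence for $D_n(x)$), reducing the difference of consecutive sums to a polynomial identity in $x$ and $n$ that can be checked directly. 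The parity hypothesis ($n$ even) should enter precisely because the telescoped boundary terms at $k=n$ and $k=0$ only combine to a multiple of $(1+2x)^3$ when $n$ is even — I would watch the sign pattern $(-1)^{?}$ that typically appears when summing products $D_{k+1}D_{k-1}$‑type expressions.

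The main obstacle I anticipate is \emph{locating the correct closed form / telescoping identity}: unlike a pure alternating sum, here the summand is a product of two consecutive Schröder polynomials weighted by a cubic in $k$, so a single telescoping $T_{k+1}-T_k$ will not suffice and one likely needs a two‑term "second‑order" telescoping (an identity of the form $\text{summand}_k = U_{k}-U_{k-1}$ where $U_k$ itself involves $S_k S_{k+1}$ and $S_k^2$). Finding $U_k$ is the creative step; once it is in hand, verifying it is a finite computation using the Delannoy recurrence, and the divisibility by $n(n+1)(n+2)$ is immediate from the form of $U_n-U_0$, while divisibility by $(1+2x)^3$ should follow by evaluating the telescoped expression and its first two derivatives at $x=-1/2$ (note $D_n(-1/2)$ and $S_n(-1/2)$ satisfy simple closed forms, since $b^2-4c=9-8=1$ degenerates there, which is why the power is exactly $3$). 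A secondary subtlety is making sure the division by $(x+1)^2$ coming from $s_k=S_k/(x+1)$ is legitimate in $\bZ[x]$; this should be handled by showing the telescoped numerator vanishes to order $2$ at $x=-1$, again via the known values $S_n(-1)$, $D_n(-1)$.
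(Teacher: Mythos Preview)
Your proposal is a plan, not a proof: the central step---finding the telescoping certificate $U_k$---is explicitly left undone, and several of the heuristics you lean on are wrong. In particular, the claim that ``the factor $k(k+1)(k+2)$ in the numerator should be exploited to cancel against $(2k+1)(2k+3)$'' is false term by term (for $k\ge 2$ these are coprime), so the detour through $S_k(x)=\dfrac{D_{k+1}(x)-D_{k-1}(x)}{2x(2k+1)}$ introduces denominators that do not go away before you telescope; it complicates rather than simplifies. The remark that ``$b^2-4c=9-8=1$ degenerates there'' to explain the exponent $3$ in $(1+2x)^3$ conflates the integer identity $D_n=T_n(3,2)$ with the polynomial $D_n(x)$ and does not justify anything. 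Checking vanishing of a closed form and its first two $x$-derivatives at $x=-1/2$ is a legitimate strategy \emph{once you have the closed form}, but you never produce one.

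The paper avoids all of this by staying with $s_k(x)$ directly. It multiplies the summand by $x^2+x$ and finds, via Zeilberger plus the reduction method of Hou--Li, an explicit $F_k$ (quadratic in $s_{n-1},s_n,s_{n+1}$ with polynomial coefficients in $k$ and $x$) such that $-4(x^2+x)k(k+1)(k+2)s_k(x)s_{k+1}(x)=\Delta_kF_k$. Summing gives a closed form for $\frac{4(x^2+x)}{n(n+1)(n+2)}\sum_{k=1}^n k(k+1)(k+2)s_ks_{k+1}$, and then two short lemmas do the divisibility work separately: one shows $s_{n+1}-(1+2x)s_n$ and $s_n-(1+2x)s_{n+1}$ are divisible by $x^2+x$ (so the $x^2+x$ on the left can be removed), and the other uses Sun's expansion $s_{n+1}(x)=\sum_{k}\binom{n}{2k}C_k(2x+1)^{n-2k}(x(x+1))^k$ to read off divisibility of the boundary expression by $(1+2x)^3$ when $n$ is even. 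Coprimality of $x^2+x$ and $(1+2x)^3$ finishes the argument. If you want to salvage your outline, drop the Delannoy substitution, look for $F_k$ as a $\bQ[x]$-linear combination of $s_{k-1}s_k$, $s_k^2$, $s_{k-1}s_{k+1}$, $s_ks_{k+1}$ with coefficients polynomial in $k$, and handle the $(1+2x)^3$ factor via the expansion above rather than derivatives.
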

The outline of the article is as follows. In Section 2--3, we  will provide two proof methods for Theorem \ref{th:2022sun Remark 5.3(1)}--\ref{th:2022sun Remark 5.3(2)}. We will prove Theorem \ref{th:2022sun} in Section 4.

\section{Proof of  Theorem \ref{th:2022sun Remark 5.3(1)} and \ref{th:2022sun Remark 5.3(2)} }
Let
\begin{equation}\label{eq:D_{n}}
	D_{n}=T_{n}(3,2)=\sum_{k=0}^{\lfloor\frac{n}{2} \rfloor}\binom{n}{2k}\binom{2k}{k}3^{n-2k}2^{k}.
\end{equation}
To prove Theorem \ref{th:2022sun Remark 5.3(1)}, we first establish two lemmas.

\begin{lem}\label{lem: D_{n}}
	For $n\in\bZ^+$,
	\begin{equation*}
		\frac{D_{n}(D_{n+1}-3D_{n})+D_{n-1}(D_{n}+3D_{n+1})}{54}
	\end{equation*}
	is a positive odd integer, where $D_{n}$ is given by \eqref{eq:D_{n}}.
\end{lem}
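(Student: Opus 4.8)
Write $A_n:=D_n(D_{n+1}-3D_n)+D_{n-1}(D_n+3D_{n+1})$, so the claim is that $A_n/54$ is a positive odd integer. The plan is to treat the three features — integer, positive, odd — separately. Integrality is automatic, since $A_n$ is an integer polynomial in the $D_j$. Positivity is also immediate: comparing the sums \eqref{eq:D_{n}} term by term (using $\binom{n+1}{2k}\ge\binom{n}{2k}$) gives $D_{n+1}>3D_n$ for $n\ge1$, so both summands of $A_n$ are positive. The substantive point is the oddness of $A_n/54$, which I would reduce to the two congruences
\[
A_n\equiv2\pmod4\qquad\text{and}\qquad A_n\equiv0\pmod{27};
\]
since $\gcd(4,27)=1$, these force $A_n\equiv54\pmod{108}$ by the Chinese Remainder Theorem, i.e. $A_n/54$ is an odd (and, by the above, positive) integer.

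The first congruence is easy: in \eqref{eq:D_{n}} every term with $k\ge1$ vanishes modulo $4$ (the $k=1$ term carries the factor $\binom21 2=4$), so $D_k\equiv3^k\equiv(-1)^k\pmod4$ for every $k$, and substituting into $A_n$ gives $A_n\equiv(-1)^{2n+1}-3+(-1)^{2n-1}+3(-1)^{2n}\equiv-1-3-1+3\equiv2\pmod4$.

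For the second congruence I would first record the refined residues coming from \eqref{eq:D_{n}} by retaining only the terms in which $3^{n-2k}$ is not divisible by the relevant power of $3$:
\[
D_{2\ell}\equiv\binom{2\ell}{\ell}2^{\ell}\pmod9,\qquad D_{2\ell+1}\equiv3(2\ell+1)\binom{2\ell}{\ell}2^{\ell}\pmod{27};
\]
in particular $3\mid D_k$ for odd $k$, and $D_{2\ell+1}/3\equiv(2\ell+1)\binom{2\ell}{\ell}2^{\ell}\pmod9$. Next I would split on the parity of $n$. In each case exactly one of the four products making up $A_n$ — namely $3D_{n-1}D_{n+1}$ when $n$ is even, or $3D_n^2$ when $n$ is odd — is divisible by $27$ and is discarded, and each of the remaining three is divisible by $3$; so $A_n/3$ is an integer, to be evaluated modulo $9$ with the residues above. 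For $n=2m+1$ this produces
\[
A_n/3\equiv D_{2m}\bigl[(2m+2)D_{2m+2}+(2m+1)D_{2m}\bigr]\pmod9,
\]
and the bracket is $\equiv(2m+2)\binom{2m+2}{m+1}2^{m+1}+(2m+1)\binom{2m}{m}2^m=9(2m+1)\binom{2m}{m}2^m\equiv0\pmod9$ by the identity $(2m+2)\binom{2m+2}{m+1}=4(2m+1)\binom{2m}{m}$. The case $n=2m$ is analogous, using $m\binom{2m}{m}=2(2m-1)\binom{2m-2}{m-1}$. Hence $27\mid A_n$, which finishes the proof.

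The mod-$4$ computation and the Chinese Remainder step are routine; the work lies in the mod-$27$ computation — setting up the refined residues, and in each parity case correctly identifying which of the four products drops out modulo $27$ and which keeps a single factor of $3$, so that after dividing by $3$ the surviving expression can be recognized as a multiple of $9$ via the two binomial identities above. I expect this bookkeeping, rather than any conceptual difficulty, to be the main obstacle. One can also note that the recurrence for $D_n$ yields $(n+1)A_n+(n-1)A_{n-1}=36nD_nD_{n-1}$, whence $27\mid(n+1)A_n$ by induction; but this leaves the residue classes $n\equiv2\pmod3$ open, so the direct computation seems unavoidable.
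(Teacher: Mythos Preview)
Your argument is correct and follows essentially the same route as the paper: both separate the mod-$2$ (resp.\ mod-$4$) and mod-$27$ questions, both read off the relevant residues of $D_n$ from the trinomial expansion \eqref{eq:D_{n}}, and both handle the mod-$27$ divisibility by a parity split on $n$ that collapses to the identities $(2m+2)\binom{2m+2}{m+1}=4(2m+1)\binom{2m}{m}$ and $m\binom{2m}{m}=2(2m-1)\binom{2m-2}{m-1}$. Your packaging is slightly tidier---recording $D_k\equiv(-1)^k\pmod4$, $D_{2\ell}\equiv\binom{2\ell}{\ell}2^{\ell}\pmod9$, $D_{2\ell+1}\equiv3(2\ell+1)\binom{2\ell}{\ell}2^{\ell}\pmod{27}$ once and for all, and making positivity explicit via $D_{n+1}>3D_n$---but the substance is the same.
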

\noindent\emph{Proof.}
When $n=1$, we have	
\begin{equation*}
	\frac{D_{1}(D_{2}-3D_{1})+D_{0}(D_{1}+3D_{2})}{54}=1.
\end{equation*}
Next, we assume $n\geq2$. From \eqref{eq:D_{n}}, we get
\begin{align*}
\frac{D_{n+1}-3D_{n}}{2}
	&=\frac{1}{2}\left(\sum_{k=0}^{\lfloor\frac{n+1}{2} \rfloor}\binom{n+1}{2k}\binom{2k}{k}3^{n+1-2k}2^{k}-3\sum_{k=0}^{\lfloor\frac{n}{2} \rfloor}\binom{n}{2k}\binom{2k}{k}3^{n-2k}2^{k}\right)\notag\\
	&=\sum_{k=1}^{\lfloor\frac{n+1}{2} \rfloor}\binom{n+1}{2k}\binom{2k-1}{k-1}3^{n+1-2k}2^{k}-\sum_{k=1}^{\lfloor\frac{n}{2} \rfloor}\binom{n}{2k}\binom{2k-1}{k-1}3^{n+1-2k}2^{k}\notag\\
	&\equiv 0 \pmod{2}\notag
\end{align*}
and 
\begin{align*}
\frac{D_{n}+3D_{n+1}}{2}
	&=\frac{1}{2}\left(\sum_{k=0}^{\lfloor\frac{n}{2} \rfloor}\binom{n}{2k}\binom{2k}{k}3^{n-2k}2^{k}+3\sum_{k=0}^{\lfloor\frac{n+1}{2} \rfloor}\binom{n+1}{2k}\binom{2k}{k}3^{n+1-2k}2^{k}\right)\notag\\
	&=\sum_{k=1}^{\lfloor\frac{n}{2} \rfloor}\binom{n}{2k}\binom{2k-1}{k-1}3^{n-2k}2^{k}+3\sum_{k=1}^{\lfloor\frac{n+1}{2} \rfloor}\binom{n+1}{2k}\binom{2k-1}{k-1}3^{n+1-2k}2^{k}+5\times3^{n}\notag \\
	&\equiv 1 \pmod{2}.\notag
\end{align*}
As $D_n$ is a positive odd, we have
\begin{equation*}
	\frac{D_{n}(D_{n+1}-3D_{n})+D_{n-1}(D_{n}+3D_{n+1})}{2}
\end{equation*}
is a positive odd integer.
By $(2,3)=1$, hence we only need to prove
\begin{equation*}
	D_{n}(D_{n+1}-3D_{n})+D_{n-1}(D_{n}+3D_{n+1})\equiv 0 \pmod {27}.
\end{equation*}
When $n$ is odd,
\begin{equation}\label{eq: odd}
	D_{n}=\sum_{k=0}^{\frac{n-1}{2}}\binom{n}{2k}\binom{2k}{k}3^{n-2k}2^{k}.
\end{equation}
When $n$ is even,
\begin{equation}\label{eq: even}
	D_{n}=\sum_{k=0}^{\frac{n}{2}}\binom{n}{2k}\binom{2k}{k}3^{n-2k}2^{k}.
\end{equation}

Next, we will proceed according to the parity of $n$.
If $n$ is a odd, by \eqref{eq: odd} and \eqref{eq: even}, we have
\begin{align}
	&D_{n}(D_{n+1}-3D_{n})+D_{n-1}(D_{n}+3D_{n+1})\notag\\
	&\equiv 3n\binom{n-1}{\frac{n-1}{2}}\binom{n+1}{\frac{n+1}{2}}2^{n}+3n\binom{n-1}{\frac{n-1}{2}}^{2}2^{n-1}+3\binom{n-1}{\frac{n-1}{2}}\binom{n+1}{\frac{n+1}{2}}2^{n}\notag\\
	&\equiv
	2^{n-1}\binom{n-1}{\frac{n-1}{2}}\Bigg(6n\binom{n+1}{\frac{n+1}{2}}+3n\binom{n-1}{\frac{n-1}{2}}+6\binom{n+1}{\frac{n+1}{2}}\Bigg)\notag\\
	&\equiv
	27(n+1)2^{n-2}\binom{n-1}{\frac{n-1}{2}}\binom{n}{\frac{n-1}{2}}\equiv 0 \pmod{27}.\notag
\end{align}
If $n$ is a even, we can obtain 
\begin{align}
	&D_{n}(D_{n+1}-3D_{n})+D_{n-1}(D_{n}+3D_{n+1})\notag\\
	&\equiv 3n2^{n}\binom{n}{\frac{n}{2}}^{2}+3(n-1)2^{n-1}\binom{n-2}{\frac{n-2}{2}}\binom{n}{\frac{n}{2}}\notag\\
	&\equiv2^{n-1}\binom{n}{\frac{n}{2}}\Bigg(6n\binom{n}{\frac{n}{2}}+3(n-1)\binom{n-2}{\frac{n-2}{2}}\Bigg)\notag\\
	&\equiv
	27(n-1)2^{n-1}\binom{n}{\frac{n}{2}}\binom{n-2}{\frac{n-2}{2}}\equiv 0 \pmod{27}.\notag
\end{align}
This completes the proof.
\qed

\begin{lem}\label{lem: D_{n}s_{n}}
		For $n\in\bZ^+$,
	\begin{equation}\label{eq: mod3}
		(n+1)D_{n}s_{n}+(n+2)D_{n-1}s_{n+1}\equiv 0 \pmod 3.
	\end{equation}
\end{lem}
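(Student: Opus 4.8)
The plan is to reduce \eqref{eq: mod3} entirely to parity considerations, built on two elementary facts modulo $3$. The first is that $D_m\equiv 0\pmod 3$ whenever $m$ is odd; this is immediate from the representation $D_m=T_m(3,2)=\sum_{k=0}^{\lfloor m/2\rfloor}\binom{m}{2k}\binom{2k}{k}3^{m-2k}2^{k}$ in \eqref{eq:D_{n}}, since for odd $m$ and $0\le k\le (m-1)/2$ one has $m-2k\ge 1$, so $3\mid 3^{m-2k}$ divides every summand.

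The one step with real content is to show that $s_m\equiv 0\pmod 3$ whenever $m$ is even. Since $2\equiv-1\pmod3$ and $m$ is even,
\[
s_m=\sum_{k=1}^{m}N(m,k)2^{m-k}\equiv\sum_{k=1}^{m}N(m,k)(-1)^{m-k}=\sum_{k=1}^{m}N(m,k)(-1)^{k}\pmod3,
\]
the last equality using $(-1)^{m-k}=(-1)^{-k}=(-1)^{k}$. I would then invoke the symmetry $N(m,k)=N(m,m+1-k)$ of the Narayana numbers, which is clear from $N(m,k)=\tfrac1m\binom mk\binom m{k-1}$. For $m$ even, $m+1$ is odd, so the involution $k\mapsto m+1-k$ on $\{1,\dots,m\}$ is fixed-point free and pairs each term with one of opposite sign, because $(-1)^{k}+(-1)^{m+1-k}=(-1)^{k}\bigl(1+(-1)^{m+1-2k}\bigr)=0$. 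Hence $\sum_{k=1}^{m}N(m,k)(-1)^{k}=0$, and therefore $3\mid s_m$ for every even $m$.

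Finally I would assemble the two cases. If $n$ is odd, then $3\mid D_n$ by the first fact, while $3\mid s_{n+1}$ by the second (as $n+1$ is even); hence both summands of $(n+1)D_ns_n+(n+2)D_{n-1}s_{n+1}$ vanish modulo $3$. If $n$ is even, then $3\mid s_n$ by the second fact, while $3\mid D_{n-1}$ by the first (as $n-1$ is odd); so again both summands vanish. This gives \eqref{eq: mod3} with no need to treat small $n$ separately.

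The main obstacle is essentially the middle step: one must produce the vanishing $\sum_{k}N(m,k)(-1)^{k}=0$ for even $m$. I expect this to be the only genuine input — the Narayana symmetry together with $2\equiv -1\pmod3$ does it cleanly — while the rest is just bookkeeping by the parity of $n$. (If one preferred to avoid Narayana numbers, one could instead try to run the argument through the recurrence $(n+2)s_{n+1}=3(2n+1)s_n-(n-1)s_{n-1}$, but reducing that recurrence mod $3$ fails to connect $s_m$ to $s_{m-2}$ precisely when the relevant index is $\equiv 1\pmod 3$, so an extra ingredient would still be required; the symmetry argument is the efficient route.)
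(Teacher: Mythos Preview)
Your argument is correct. Both facts you establish --- $3\mid D_m$ for odd $m$ from the trinomial expansion, and $3\mid s_m$ for even $m$ via the Narayana symmetry $N(m,k)=N(m,m+1-k)$ combined with $2\equiv-1\pmod3$ --- are valid, and the case split on the parity of $n$ then finishes the lemma immediately.

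Your route differs from the paper's in two respects. First, the paper inserts the identity $D_{n-1}=D_{n+1}-4(2n+1)s_n$ to rewrite the expression as $(n+1)D_ns_n+(n+2)D_{n+1}s_{n+1}-4(n+2)(2n+1)s_ns_{n+1}$ before applying exactly the same two divisibility facts; this rewriting is unnecessary, since as you observe the original two summands already vanish mod $3$ term by term. Second, for the key input $3\mid s_{2m}$ the paper appeals to the polynomial statement $s_{2m}(x)/(1+2x)\in\bZ[x]$ (citing Sun), whereas you give a direct, self-contained proof via the alternating Narayana sum. Your approach is more elementary and avoids both the auxiliary identity and the external citation; the paper's approach, on the other hand, fits a pattern used elsewhere in the article where the polynomial divisibility $s_{2m}(x)/(1+2x)\in\bZ[x]$ is reused (e.g.\ in the proof of Theorem~\ref{th:2022sun}).
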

\noindent\emph{Proof.}
When $n=1$, $2D_{1}s_{1}+3D_{0}s_{2}=15\equiv 0 \pmod 3$. Next we assume $n\geq2$. By Lemma 1.2 of \cite{Sun2018}, we have
\begin{equation*}
	D_{n-1}=D_{n+1}-4(2n+1)s_{n}.
\end{equation*}
and hence
\begin{align}\label{eq: D_{n}s_{n}}
	&(n+1)D_{n}s_{n}+(n+2)D_{n-1}s_{n+1}\notag\\
	&=(n+1)D_{n}s_{n}+(n+2)(D_{n+1}-4(2n+1)s_{n})s_{n+1} \notag\\
	&=(n+1)D_{n}s_{n}+(n+2)D_{n+1}s_{n+1}-4(n+2)(2n+1)s_{n}s_{n+1}.
\end{align}
In light of \cite[Section 4]{Sun2018} and \cite[Lemma 4.5]{Sun2022}, we have $s_{2n}(x)/(1+2x)\in\bZ[x]$ for all $n\in\bZ^{+}$. Combining this with \eqref{eq: odd} and \eqref{eq: D_{n}s_{n}}, we get \eqref{eq: mod3}.
\qed

\noindent\emph{Proof of Theorem \ref{th:2022sun Remark 5.3(1)}.}
Let $a_{k}=(-1)^{n-k}D_{k}$ and $b_{k}=D_{k-1}$, by Zeilberger’s algorithm\cite{Zeilberger1990} we have
\begin{equation}\label{a_{k}=(-1)^{n-k}D_{k}}
(k+2)a_{k+2}=-3(3+2k)a_{k+1}-(k+1)a_{k}
\end{equation}
and
\begin{equation}\label{b_{k}=D_{k-1}}
(k+1)b_{k+2}=3(1+2k)b_{k+1}-kb_{k}.
\end{equation}
By the telescoping method\cite{Hou2023}, we find that
\begin{equation*}
k^{2}a_{k}b_{k}=\Delta_{k}F_{k}
\end{equation*}
where $\Delta$ is the difference operator
(that is, $\Delta F(n) = F(n + 1)-F(n)$) and 
\begin{equation*}
F_{k}=\frac{1}{72}(k-1)a_{k-1}b_{k-1}+\frac{1}{12}k(1-k)a_{k}b_{k-1}+\frac{1}{24}(2k^2-2k+1)a_{k-1}b_{k}+\frac{1}{72}ka_{k}b_{k}.
\end{equation*}
Since $F_{1}=\frac{b_{0}}{72}(3a_{0}+a_{1})=0$, we derive
\begin{equation}\label{k^{2}a_{k}b_{k}3}
\sum_{k=1}^{n}k^{2}a_{k}b_{k}=\frac{1}{72}na_{n}b_{n}-\frac{1}{12}n(n+1)a_{n+1}b_{n}+\frac{1}{24}(2n^2+2n+1)a_{n}b_{n+1}+\frac{1}{72}(n+1)a_{n+1}b_{n+1}.
\end{equation}
Combining equality \eqref{k^{2}a_{k}b_{k}3} and \eqref{b_{k}=D_{k-1}}, we obtain
\begin{equation}\label{eq: k^2D_kD_{k-1}}
\sum_{k=1}^{n}(-1)^{n-k}k^{2}D_{k}D_{k-1}=(n+1)\left(\frac{3(n+1)D_{n}^{2}+(8+18n)D_{n+1}D_{n}-3(n+1)D_{n+1}^2}{36} \right).
\end{equation}
By recurrence relation of $D_{n}$, we also have
\begin{equation*}
D_{n+1}-3D_{n}=n\left(6D_{n}-D_{n+1}-D_{n-1} \right).
\end{equation*} 
Hence, by equality \eqref{eq: k^2D_kD_{k-1}} we deduce that
\begin{align*}
&\frac{36}{n+1}\sum_{k=1}^{n}(-1)^{n-k}k^{2}D_{k}D_{k-1}\notag\\
&=3(n+1)D_{n}^{2}+(8+18n)D_{n+1}D_{n}-3(n+1)D_{n+1}^{2}\notag\\
&=3D_{n}^{2}+8D_{n}D_{n+1}-3D_{n+1}^{2}+n(3D_{n}^{2}+18D_{n}D_{n+1}-3D_{n+1}^{2})\notag\\
&=n(3D_{n}^{2}+18D_{n}D_{n+1}-3D_{n+1}^{2})-(D_{n+1}-3D_{n})(3D_{n+1}+D_{n})\notag\\
&=n\bigl(D_{n}(D_{n+1}-3D_{n})+D_{n-1}(D_{n}+3D_{n+1}) \bigr).
\end{align*}
Therefore, we have
\begin{equation*}
	\frac{2}{3n(n+1)}\sum_{k=1}^{n}(-1)^{n-k}k^{2}D_{k}D_{k-1}=\frac{D_{n}(D_{n+1}-3D_{n})+D_{n-1}(D_{n}+3D_{n+1})}{54}.
\end{equation*}
 Applying Lemma \ref{lem: D_{n}}, we complete the proof of the first half of Theorem \ref{th:2022sun Remark 5.3(1)}.
 
 Similarly, let $a_{k}=D_{k-1}$ and $b_{k}=(-1)^{n-k}s_{k}$. By Zeilberger’s algorithm\cite{Zeilberger1990}, we find that
\begin{equation*}
(k+1)a_{k+2}=3(1+2k)a_{k+1}-ka_{k}\quad\text{and}\quad(k+3)b_{k+2}=-3(3+2k)b_{k+1}-kb_{k}.
\end{equation*}
By the telescoping method\cite{Hou2023}, we find that
\begin{equation*}
(4k^{2}+2k-1)a_{k}b_{k}=\Delta_{k}G_{k}
\end{equation*}
where
\begin{equation*}
G_{k}=\frac{1}{3}(k-1)\big(ka_{k}b_{k-1}-(k+1)a_{k-1}b_{k} \big).
\end{equation*}
Due to $G_{1}=0$, we derive
\begin{equation}
\sum_{k=1}^{n}(4k^{2}+2k-1)a_{k}b_{k}=n\left(\frac{(n+1)a_{n+1}b_{n}-(n+2)a_{n}b_{n+1}}{3} \right).
\end{equation}
Hence, we clearly have
\begin{equation}\label{eq: (-1)^{n-k}D_{k-1}s_{k}}
\frac{1}{n}\sum_{k=1}^{n}(-1)^{n-k}(4k^{2}+2k-1)D_{k-1}s_{k}=\frac{(n+1)D_{n}s_{n}+(n+2)D_{n-1}s_{n+1}}{3}.
\end{equation}
By  $s_{n}$ and $D_{n}$ being positive odd integers, we have $(n+1)D_{n}s_{n}+(n+2)D_{n-1}s_{n+1}$ is a positive odd integer for any $n\in\bZ^+$. This completes the proof with the help of Lemma \ref{lem: D_{n}s_{n}}.
\qed

The following lemma will be used in our proof of Theorem \ref{th:2022sun Remark 5.3(2)}.
\begin{lem}\label{lem: s_n and s_{n+1}}
	For any odd prime p, we have
	\begin{equation*}
		s_p \equiv 2 \pmod{p} \quad\text{and}\quad s_{p+1} \equiv 3 \pmod{p}.
	\end{equation*}
\end{lem}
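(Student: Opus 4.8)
The plan is to reduce both congruences to Fermat's little theorem by showing that, modulo $p$, all but two of the Narayana numbers appearing in
\[
s_p=\sum_{k=1}^{p}N(p,k)2^{p-k},\qquad s_{p+1}=\sum_{k=1}^{p+1}N(p+1,k)2^{p+1-k}
\]
vanish, after which each sum collapses to a two-term expression involving a power of $2$.

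For the first congruence I would start from the elementary fact $p\mid\binom{p}{k}$ for $1\le k\le p-1$, together with the identity $\frac1p\binom{p}{k}=\frac1k\binom{p-1}{k-1}$, so that $\frac1p\binom{p}{k}$ is an integer coprime to $p$ in that range. Writing $N(p,k)=\bigl(\tfrac1p\binom{p}{k}\bigr)\binom{p}{k-1}$, for $2\le k\le p-1$ the factor $\binom{p}{k-1}$ is divisible by $p$, hence $N(p,k)\equiv 0\pmod p$; a direct evaluation gives $N(p,1)=N(p,p)=1$. Therefore $s_p\equiv N(p,1)2^{p-1}+N(p,p)2^{0}=2^{p-1}+1\equiv 2\pmod p$ by Fermat.

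For the second congruence I would use Pascal's rule $\binom{p+1}{k}=\binom{p}{k}+\binom{p}{k-1}$ to conclude $\binom{p+1}{k}\equiv 0\pmod p$ for $2\le k\le p-1$ and $\binom{p+1}{k}\equiv 1\pmod p$ for $k\in\{0,1,p,p+1\}$ (equivalently, invoke Lucas' theorem with $p+1=\overline{11}_p$). Then in the integer $N(p+1,k)=\frac{1}{p+1}\binom{p+1}{k}\binom{p+1}{k-1}$, whenever one of the two binomials is divisible by $p$ so is $(p+1)N(p+1,k)$, and since $\gcd(p+1,p)=1$ this forces $p\mid N(p+1,k)$; the only indices $k\in\{1,\dots,p+1\}$ for which both $\binom{p+1}{k}$ and $\binom{p+1}{k-1}$ are nonzero mod $p$ are $k=1$ and $k=p+1$, where $N(p+1,1)=N(p+1,p+1)=1$. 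Hence $s_{p+1}\equiv 2^{p}+1\equiv 3\pmod p$, again by Fermat.

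There is no real obstacle here, only two points to handle carefully: the division by $p+1$ in $N(p+1,k)$ must be argued through integrality of the Narayana numbers and coprimality of $p+1$ with $p$ rather than cancelled naively, and the small-index bookkeeping for $k\in\{1,2,p,p+1\}$ should be checked one index at a time — in particular the case $p=3$, where the ranges degenerate, can simply be verified directly (for instance $s_3=11\equiv2$, $s_4=45\equiv3\equiv0\pmod3$).
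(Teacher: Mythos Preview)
Your proposal is correct and follows essentially the same route as the paper's own proof: both isolate the $k=1$ and $k=p$ (resp.\ $k=p+1$) terms and argue that the remaining Narayana numbers are divisible by $p$, reducing each congruence to $2^{p-1}+1$ (resp.\ $2^p+1$) via Fermat. In fact you give a slightly more careful justification than the paper does for why the middle terms vanish in the $s_{p+1}$ case, handling the integrality of $N(p+1,k)$ and the coprimality of $p+1$ with $p$ explicitly.
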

\noindent\emph{Proof.}
Let $p$ be an odd prime. Then we have  
\begin{align*}
	s_p&=\sum_{k=1}^{p}\frac{1}{p}\binom{p}{k}\binom{p}{k-1}2^{p-k}\\
	&=2^{p-1}+\sum_{k=2}^{p-1}\frac{1}{p}\binom{p}{k}\binom{p}{k-1}2^{p-k}+1\\
	&\equiv 2^{p-1}+1 \equiv 2 \pmod{p},
\end{align*}
and
\begin{align*}
	s_{p+1}=&\sum_{k=1}^{p+1}\frac{1}{p+1}\binom{p+1}{k}\binom{p+1}{k-1}2^{p+1-k}\\
		&=2^{p}+\sum_{k=2}^{p}\frac{1}{p+1}\binom{p+1}{k}\binom{p+1}{k-1}2^{p+1-k}+1\\
	&\equiv 2^{p}+1 \equiv 3 \pmod{p}.
\end{align*}
\qed

\noindent\emph{Proof of Theorem \ref{th:2022sun Remark 5.3(2)}.}
Let $p\textgreater3$ be a prime. By  equality \eqref{eq: k^2D_kD_{k-1}}, we have
\begin{align*}
	\frac{1}{p}\sum_{k=1}^{p-1}k^2D_kD_{k-1}&=\frac{1}{12}pD_{p-1}^2+\frac{1}{2}(p-1)D_{p-1}D_{p}+\frac{2}{9}D_{p-1}D_{p}-\frac{1}{12}pD_{p}^2\\
	&\equiv -\frac{5}{18}D_{p-1}D_{p} \pmod {p}.
\end{align*}
From the proof of \cite[Lemma 4]{Jia2023}, we get
\begin{equation*}
	D_{p-1}\equiv 1 \pmod{p} \quad\text{and}\quad D_{p}\equiv 3 \pmod{p}.
\end{equation*}
Thus, we have
\begin{equation*}
	\frac{1}{p}\sum_{k=1}^{p-1}k^2D_kD_{k-1}\equiv -\frac{5}{6} \pmod{p}.
\end{equation*}
By equality \eqref{eq: (-1)^{n-k}D_{k-1}s_{k}}, we have
\begin{equation*}
\frac{1}{p}\sum_{k=1}^{p}(-1)^k(4k^{2}+2k-1)D_{k-1}s_{k}=-\frac{1}{3}(p+1)D_{p}s_{p}-\frac{1}{3}(p+2)D_{p-1}s_{p+1}.
\end{equation*}
In view of Lemma \ref{lem: s_n and s_{n+1}}, we have 
\begin{equation*}
	\frac{1}{p}\sum_{k=1}^{p}(-1)^k(4k^{2}+2k-1)D_{k-1}s_{k}\equiv -4 \pmod{p}.
\end{equation*}
This ends the proof. \qed

\section{Another proof of  Theorem \ref{th:2022sun Remark 5.3(1)} and \ref{th:2022sun Remark 5.3(2)} }
\begin{lem}\label{lem: nD_nD_{n-1}}
	For any $n\in\bZ^+$, we have 
	 \begin{equation}\label{eq: nD_nD_{n-1}}
		nD_nD_{n-1}\ =\ 3\sum_{j=0}^{n-1}(n-j)\binom{n+j}{2j}{\binom{2j}j}^22^j\text{.}  
	\end{equation}
\end{lem}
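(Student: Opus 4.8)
The plan is to derive \eqref{eq: nD_nD_{n-1}} from a Clausen‑type square formula for the Delannoy polynomial $D_n(x)=\sum_{k\ge 0}\binom nk\binom{n+k}k x^{k}=\sum_{k\ge 0}\binom{n+k}{2k}\binom{2k}k x^{k}$ (so $D_n(1)=D_n$), namely the polynomial identity
\begin{equation}\label{eq:clausen-proposal}
	D_n(x)^2=\sum_{j=0}^{n}\binom{n+j}{2j}\binom{2j}{j}^2\bigl(x(x+1)\bigr)^{j}.
\end{equation}
Granting \eqref{eq:clausen-proposal}, the right‑hand side of \eqref{eq: nD_nD_{n-1}} turns out to be a rearrangement of the value and the first derivative of \eqref{eq:clausen-proposal} at $x=1$; in fact the constant $3$ appearing in \eqref{eq: nD_nD_{n-1}} is exactly $\frac{d}{dx}\bigl(x(x+1)\bigr)\big|_{x=1}$.

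To prove \eqref{eq:clausen-proposal} I would use $D_n(x)={}_2F_1(-n,n+1;1;-x)$ together with the quadratic transformation ${}_2F_1\bigl(a,b;\tfrac{a+b+1}{2};z\bigr)={}_2F_1\bigl(\tfrac a2,\tfrac b2;\tfrac{a+b+1}{2};4z(1-z)\bigr)$, which applies here since $\tfrac{-n+(n+1)+1}{2}=1$ and gives $D_n(x)={}_2F_1\bigl(-\tfrac n2,\tfrac{n+1}{2};1;-4x(x+1)\bigr)$; then Clausen's formula ${}_2F_1\bigl(\alpha,\beta;\alpha+\beta+\tfrac12;w\bigr)^2={}_3F_2\bigl(2\alpha,2\beta,\alpha+\beta;\alpha+\beta+\tfrac12,2\alpha+2\beta;w\bigr)$ with $\alpha=-\tfrac n2$, $\beta=\tfrac{n+1}{2}$ yields $D_n(x)^2={}_3F_2\bigl(-n,n+1,\tfrac12;1,1;-4x(x+1)\bigr)$, and expanding this ${}_3F_2$ (using $\binom nk\binom{n+k}k=\binom{n+k}{2k}\binom{2k}k$) is precisely \eqref{eq:clausen-proposal}. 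Alternatively, \eqref{eq:clausen-proposal} can be proved entirely within the paper's toolkit by checking that both sides satisfy the same linear recurrence in $n$ with coefficients in $\bQ[x]$ (via Zeilberger's algorithm) plus a few base cases; this is the only step I expect to cost any real computation.

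Next I would extract two identities from \eqref{eq:clausen-proposal} at $x=1$, where $x(x+1)=2$ and $\frac{d}{dx}(x(x+1))=3$. The identity itself gives $\sum_{j=0}^{n}\binom{n+j}{2j}\binom{2j}{j}^2 2^{j}=D_n^2$, and differentiating \eqref{eq:clausen-proposal} then setting $x=1$ gives $\frac32\sum_{j=0}^{n} j\binom{n+j}{2j}\binom{2j}{j}^2 2^{j}=2D_n D_n'(1)$, i.e. $\sum_{j=0}^{n} j\binom{n+j}{2j}\binom{2j}{j}^2 2^{j}=\tfrac43 D_n D_n'(1)$. For $D_n'(1)=\sum_{k\ge1}k\binom nk\binom{n+k}k$ I would use the derivative relation $2x(x+1)D_n'(x)=n\bigl((2x+1)D_n(x)-D_{n-1}(x)\bigr)$ (equivalent to $D_n(x)=P_n(2x+1)$ together with the Legendre identity $(t^2-1)P_n'(t)=n\bigl(tP_n(t)-P_{n-1}(t)\bigr)$), which at $x=1$ gives $D_n'(1)=\tfrac n4(3D_n-D_{n-1})$. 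Hence $\sum_{j=0}^{n} j\binom{n+j}{2j}\binom{2j}{j}^2 2^{j}=\tfrac n3 D_n(3D_n-D_{n-1})$.

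Finally, since the $j=n$ summand of $(n-j)\binom{n+j}{2j}\binom{2j}{j}^2 2^{j}$ vanishes,
\begin{align*}
	3\sum_{j=0}^{n-1}(n-j)\binom{n+j}{2j}\binom{2j}{j}^2 2^{j}
	&=3n\sum_{j=0}^{n}\binom{n+j}{2j}\binom{2j}{j}^2 2^{j}-3\sum_{j=0}^{n}j\binom{n+j}{2j}\binom{2j}{j}^2 2^{j}\\
	&=3nD_n^2-nD_n(3D_n-D_{n-1})=nD_nD_{n-1},
\end{align*}
which is \eqref{eq: nD_nD_{n-1}}. The single genuinely nontrivial ingredient is the Clausen‑type identity \eqref{eq:clausen-proposal}; everything downstream of it is routine bookkeeping, and the appearance of the constant $3$ in the statement is explained by its being the derivative of $x(x+1)$ at $x=1$.
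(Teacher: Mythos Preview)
Your argument is correct. The Clausen-type identity \eqref{eq:clausen-proposal} is the well-known square formula for $P_n(2x+1)$, and both routes you indicate for it (quadratic transformation plus Clausen, or Zeilberger plus base cases) work; the differentiation step and the Legendre relation $(t^2-1)P_n'(t)=n\bigl(tP_n(t)-P_{n-1}(t)\bigr)$ then combine exactly as you wrote to give \eqref{eq: nD_nD_{n-1}}.

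This is a genuinely different route from the paper, which does not prove the lemma at all but simply specializes a general identity already established by Sun (equation~(4.1) of \cite{Sun2022}, an identity for products of generalized central trinomial coefficients $T_n(b,c)$) at the parameters $b=2$, $c=1$. The paper therefore treats this lemma as a black box; your approach supplies a self-contained proof, and your observation that the constant $3$ is $\frac{d}{dx}\bigl(x(x+1)\bigr)\big|_{x=1}$ gives a structural explanation that the citation does not. Conversely, the paper's approach is shorter in context and places the identity inside a known two-parameter family, which is natural given that the paper already invokes $D_n=T_n(3,2)$ and Sun's work elsewhere.
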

\begin{proof}
Taking $b=2$ and $c=1$ in equality (4.1) of \cite{Sun2022} yields.
\end{proof}

\begin{lem}\label{lem: (-1)^{n-k}k(k-j)}
	For any $j\in\bN$ and integer $n\textgreater j$, we have 
	\begin{equation}\label{eq: (-1)^{n-k}k(k-j)}
		\sum_{k=j+1}^n (-1)^{n-k}k(k-j)\binom{k+j}{2j}\ =\ \frac 1{2\binom{2j}j}\binom{n+j+1}j\binom{n-1}jn(n+1)\text{.}
	\end{equation}
\end{lem}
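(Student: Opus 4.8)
The plan is to produce an explicit ``alternating antidifference'' of the summand and then telescope, in the spirit of the telescoping method already used in this paper. Set
\[
g(k)\ =\ \frac{1}{2\binom{2j}{j}}\binom{k+j+1}{j}\binom{k-1}{j}\,k(k+1).
\]
Clearing the two binomial coefficients turns this into $\frac{1}{2(2j)!}$ times the product of the $2j+2$ consecutive integers $k-j,\,k-j+1,\,\dots,\,k+j+1$. With $g$ written in this product form one checks directly that
\[
g(k)+g(k-1)\ =\ k(k-j)\binom{k+j}{2j}\qquad\text{for all }k,
\]
because $g(k)$ and $g(k-1)$ share the common factor $(k-j)(k-j+1)\cdots(k+j)$, while the two leftover factors $(k+j+1)$ and $(k-j-1)$ add up to $2k$; recognizing $(k-j+1)\cdots(k+j)=(2j)!\binom{k+j}{2j}$ then yields the right-hand side. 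As this is a polynomial identity in $k$, it may alternatively be verified by evaluating both sides at $2j+3$ points.

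Granting this identity, the left-hand side of \eqref{eq: (-1)^{n-k}k(k-j)} becomes $\sum_{k=j+1}^{n}(-1)^{n-k}\bigl(g(k)+g(k-1)\bigr)$. Splitting the sum and shifting the index by one in the $g(k-1)$-part, all terms with $j+1\le k\le n-1$ cancel in pairs, and the total collapses to $g(n)-(-1)^{n-j}g(j)$. Now $g(j)=0$, since the product form of $g(k)$ has $(k-j)$ as a factor; hence the sum equals
\[
g(n)\ =\ \frac{1}{2\binom{2j}{j}}\binom{n+j+1}{j}\binom{n-1}{j}\,n(n+1),
\]
which is precisely \eqref{eq: (-1)^{n-k}k(k-j)}.

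The only genuinely non-routine point is finding the certificate $g$. It is what Gosper's algorithm returns when applied to the alternating hypergeometric term $(-1)^{k}k(k-j)\binom{k+j}{2j}$, and in any case the shape of the target in \eqref{eq: (-1)^{n-k}k(k-j)} essentially forces the ansatz $g(k)\propto\binom{k+j+1}{j}\binom{k-1}{j}k(k+1)$, after which only the normalizing constant must be pinned down. Everything downstream --- the polynomial identity for $g(k)+g(k-1)$ and the finite telescoping --- is elementary, so I expect the whole argument to be short, with no substantive obstacle once $g$ is written down.
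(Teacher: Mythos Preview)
Your proof is correct. It is essentially the same argument as the paper's, which proceeds by induction on $n$: the inductive step there verifies precisely your identity $g(k)+g(k-1)=k(k-j)\binom{k+j}{2j}$ (with $k=m+1$), and the base case amounts to your observation that $g(j)=0$. Your product representation $g(k)=\frac{1}{2(2j)!}(k-j)(k-j+1)\cdots(k+j+1)$ does make that key recurrence more transparent than the factorial manipulations in the paper, but the underlying logic is the same telescoping/induction.
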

\noindent\emph{Proof.}
When $n=j+1$, 
\begin{align*}
	& \sum_{k=j+1}^n(-1)^{n-k}k(k-j)\binom{k+j}{2j}\quad\quad\quad\quad\quad\quad\quad\\
	={} & (j+1)(2j+1) \notag \\
	={} & \frac1{2\binom{2j}j}\binom{2j+2}j(j+1)(j+2) \notag\\[6pt]
	={} & \frac 1{2\binom{2j}j}\binom{n+j+1}j\binom{n-1}jn(n+1)\text{.}
\end{align*}
Assuming that equation \eqref{eq: (-1)^{n-k}k(k-j)} holds when $n=m(j+1\leq m\in\bZ)$. Then for $n=m+1$, 
 \begin{align*}
	& \sum_{k=j+1}^{n} (-1)^{n-k}k(k-j)\binom{k+j}{2j} \notag\\[6pt]
	={} & (m+1)(m+1-j)\binom{m+1-j}{2j}-\frac 1{2\binom{2j}j}\binom{m+j+1}j\binom{m-1}jm(m+1) \notag\\[6pt]
	={} & \frac{(m+j+1)!}{(2j)!(m-j)!}(m+1)-\frac12 \frac{(m+j+1)!}{(2j)!(m-j-1)!} \notag\\[6pt]
	={} & \frac12\frac{(m+j+2)!}{(2j)!(m-j)!} \notag\\[6pt]
	={} & \frac 1{2\binom{2j}j}\binom{n+j+1}j\binom{n-1}jn(n+1)\text{.}
\end{align*}
In view of the above, by induction, equality \eqref{eq: (-1)^{n-k}k(k-j)} holds.
\qed
\begin{lem}\label{lem: D_{n-1}s_{n}}
	For any $n \in \bZ^+$, we have 
	 \begin{equation}\label{eq: D_{n-1}s_{n}}
		D_{n-1}s_n\ =\ \sum_{j=0}^{n-1}\binom{n+j}{2j}{C_j}^2\left( 2j+1-j(j+1)\frac{2n+1}{n(n+1)} \right)2^j\text{,}   
	\end{equation}
	where $C_j$ denotes the Catalan number $\binom{2j}{j}/(j+1)$.
\end{lem}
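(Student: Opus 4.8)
The plan is to expand $D_{n-1}s_n$ into a single Clausen-type sum over an index $j$, in the same spirit as Lemma \ref{lem: nD_nD_{n-1}}, and then to reshape that sum into the form asserted in \eqref{eq: D_{n-1}s_{n}}.

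First I would linearise the product. Setting $x=1$ in \eqref{eq: D_{n+1}(x)-D_{n-1}(x)} and \eqref{eq: (x+1)s_{n}(x)} and recalling $S_n=2s_n$ gives $D_{n+1}-D_{n-1}=4(2n+1)s_n$; together with the three-term Delannoy recurrence $(n+1)D_{n+1}=3(2n+1)D_n-nD_{n-1}$ (equivalently the relation $D_{n+1}-3D_n=n(6D_n-D_{n+1}-D_{n-1})$ recorded just after \eqref{eq: k^2D_kD_{k-1}}) this eliminates $D_{n+1}$ and produces
\begin{equation*}
	D_{n-1}s_n=\frac{D_{n-1}\bigl(3D_n-D_{n-1}\bigr)}{4(n+1)}=\frac{3D_nD_{n-1}-D_{n-1}^{2}}{4(n+1)}.
\end{equation*}
Into this I would substitute two bilinear expansions: Lemma \ref{lem: nD_nD_{n-1}} for $D_nD_{n-1}$, and the square-Clausen identity $D_m^{2}=\sum_{j=0}^{m}\binom{m+j}{2j}\binom{2j}{j}^{2}2^{j}$ for $D_{n-1}^{2}$. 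The latter may be quoted from \cite{Sun2022}, or derived in place: multiplying the recurrence $nD_n=3(2n-1)D_{n-1}-(n-1)D_{n-2}$ through by $D_{n-1}$ and solving for $D_{n-1}^{2}$ expresses it via the consecutive products $D_nD_{n-1}$ and $D_{n-1}D_{n-2}$ (both covered by Lemma \ref{lem: nD_nD_{n-1}}), whereupon the two sums merge by means of $\binom{m-1+j}{2j}=\frac{m-j}{m+j}\binom{m+j}{2j}$.

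Substituting both expansions and bringing everything over the common summand $\binom{n+j}{2j}\binom{2j}{j}^{2}2^{j}$ (again via $\binom{n-1+j}{2j}=\frac{n-j}{n+j}\binom{n+j}{2j}$), a short computation collapses the right-hand side to
\begin{equation*}
	D_{n-1}s_n=\sum_{j=0}^{n-1}\frac{(n-j)(8n+9j)}{4n(n+1)(n+j)}\,\binom{n+j}{2j}\binom{2j}{j}^{2}2^{j}.
\end{equation*}
It then remains to match this with the target sum in \eqref{eq: D_{n-1}s_{n}}. Since $C_j=\binom{2j}{j}/(j+1)$, and using the factorisation $(2j+1)n(n+1)-j(j+1)(2n+1)=(n-j)(2jn+n+j+1)$, the $j$-th term of the target equals $\binom{n+j}{2j}\binom{2j}{j}^{2}\frac{(n-j)(2jn+n+j+1)}{(j+1)^{2}n(n+1)}2^{j}$, which does not agree term by term with the summand just obtained. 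Thus the genuine content of the lemma is the vanishing identity
\begin{equation*}
	\sum_{j=0}^{n}\binom{n+j}{2j}\binom{2j}{j}^{2}(n-j)\,\frac{4n(n-1)(2j+1)-j(j+1)(9j+5)}{4(j+1)^{2}(n+j)}\,2^{j}=0,
\end{equation*}
where the $j=n$ term drops out on its own.

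I expect this last identity to be the only genuine obstacle; the linearisation and the substitution are routine bookkeeping. As the summand is hypergeometric in $j$ with parameter $n$, I would dispatch it by creative telescoping: Zeilberger's algorithm furnishes a recurrence in $n$ for the left-hand side which, together with the small-$n$ checks (the case $n=1$ being immediate, since $D_0s_1=1=\binom{1}{0}C_0^{2}(1-0)$), forces it to vanish identically; alternatively Gosper's algorithm exhibits a hypergeometric antidifference $w_j$ with $w_0=w_{n+1}=0$, so the sum telescopes to $0$. A more hands-on route is to split the target sum using $C_j^{2}j(j+1)=\binom{2j}{j}\binom{2j}{j-1}$ and $(j+1)C_j^{2}=\binom{2j}{j}C_j$ and reconcile the two pieces against the single sum above; and should \cite{Sun2022} already record a Schröder--Delannoy bilinear identity specialising directly to \eqref{eq: D_{n-1}s_{n}}, one may instead cite it verbatim and skip this step entirely.
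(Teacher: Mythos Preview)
Your proposal is correct in outline but takes a far more laborious route than the paper. The paper's entire proof is a one-line citation: equality (2.11) of \cite{Sun2018} already records the polynomial identity
\[
D_{n-1}(x)s_n(x)=\sum_{j=0}^{n-1}\binom{n+j}{2j}C_j^{2}\left(2j+1-j(j+1)\frac{2n+1}{n(n+1)}\right)x^j(x+1)^j,
\]
and the lemma follows immediately upon setting $x=1$ (stated in the paper as ``taking $x=2$'', since $x(x+1)=2$ there). Your closing remark anticipates precisely this, though the relevant reference is \cite{Sun2018} rather than \cite{Sun2022}.

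By contrast, your argument linearises $D_{n-1}s_n$ via the relation $s_n=(3D_n-D_{n-1})/(4(n+1))$, substitutes the bilinear expansions for $D_nD_{n-1}$ and $D_{n-1}^{2}$, and arrives at a single sum which does not match the target termwise, leaving a residual vanishing identity to be dispatched by Zeilberger or Gosper. All of this is sound (I checked your linearisation, the intermediate single-sum form, and the factorisation $(2j+1)n(n+1)-j(j+1)(2n+1)=(n-j)(2jn+n+j+1)$, and the vanishing identity does hold for small $n$), so the route would succeed. What it buys you is self-containment: you avoid importing a black-box identity from the literature. What it costs is that the ``only genuine obstacle'' you isolate is essentially equivalent in difficulty to the lemma itself, so the proof is circular unless you actually run the telescoping algorithm and record the certificate. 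The paper's approach simply offloads that work to \cite{Sun2018}.
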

\noindent\emph{Proof.}
Taking $x=2$ in equality (2.11) of \cite{Sun2018} yields.
\qed

\begin{lem}\label{lem: (-1)^{n-k}(4k^2+2k-1)}
	 For any $j \in \mathbb{N}$ and integer $n>j$, we have
	\begin{align}\label{eq: (-1)^{n-k}(4k^2+2k-1)}
		& \sum_{k=j+1}^n(-1)^{n-k}(4k^2+2k-1)\binom{k+j}{2j}\left(2j+1-j(j+1)\frac{2k+1}{k(k+1)}\right)\notag\\[6pt]
		={} & \frac1{C_j}w(n,j+1)\ n\ \big((4j+2)n+4j+3\big)\text{,}  
	\end{align}
	where $w(n,k)=\frac{1}{k}\binom{n-1}{k-1}\binom{n+k}{k-1}\in\bZ.$
\end{lem}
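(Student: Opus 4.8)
The plan is to prove \eqref{eq: (-1)^{n-k}(4k^2+2k-1)} by induction on $n$ with $j$ fixed, in the same spirit as the proof of Lemma~\ref{lem: (-1)^{n-k}k(k-j)}. (This lemma is exactly what one needs after substituting the formula of Lemma~\ref{lem: D_{n-1}s_{n}} into $\sum_{k=1}^n(-1)^{n-k}(4k^2+2k-1)D_{k-1}s_k$ and interchanging the two summations.) Write
\begin{equation*}
	T(k)=(4k^2+2k-1)\binom{k+j}{2j}\left(2j+1-j(j+1)\frac{2k+1}{k(k+1)}\right)
\end{equation*}
for the $k$-th summand on the left of \eqref{eq: (-1)^{n-k}(4k^2+2k-1)} without the sign, so that this left side is $L(n):=\sum_{k=j+1}^n(-1)^{n-k}T(k)$ and satisfies $L(n)=T(n)-L(n-1)$ for $n\geq j+2$; denote the right side of \eqref{eq: (-1)^{n-k}(4k^2+2k-1)} by $R(n)$. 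When $n=j+1$ the sum has one term, and a direct check gives $L(j+1)=T(j+1)=R(j+1)=\frac{2(2j+1)(j+1)(4j^2+10j+5)}{j+2}$, using $\binom{2j+1}{2j}=2j+1$ and $w(j+1,j+1)/C_j=\frac{2(2j+1)}{j+2}$. Granting the identity for $n-1$, since $L(n)=T(n)-L(n-1)=T(n)-R(n-1)$ the induction step amounts to proving
\begin{equation*}
	T(n)=R(n)+R(n-1)\qquad(n\geq j+2).
\end{equation*}

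To handle this I would first clear denominators in the bracket of $T$ to get the factorization
\begin{equation*}
	2j+1-j(j+1)\frac{2k+1}{k(k+1)}=\frac{(k-j)\big((2j+1)k+j+1\big)}{k(k+1)}.
\end{equation*}
Then, using the routine identities $\binom{k+j}{2j}\binom{2j}{j}=\binom{k+j}{j}\binom{k}{j}$, $(k-j)\binom{k}{j}=(j+1)\binom{k}{j+1}=k\binom{k-1}{j}$ and $\frac{1}{k+1}\binom{k+j}{j}=\frac{1}{k+j+1}\binom{k+j+1}{j}$, one rewrites
\begin{equation*}
	\binom{2j}{j}\,T(n)=\frac{(4n^2+2n-1)\big((2j+1)n+j+1\big)}{n+j+1}\binom{n-1}{j}\binom{n+j+1}{j}.
\end{equation*}
On the other side, $\binom{2j}{j}\,R(n)=n\big((4j+2)n+4j+3\big)\binom{n-1}{j}\binom{n+j+1}{j}$ directly from $w(n,j+1)=\frac{1}{j+1}\binom{n-1}{j}\binom{n+j+1}{j}$ and $C_j=\frac{1}{j+1}\binom{2j}{j}$, while $\binom{n-2}{j}=\frac{n-1-j}{n-1}\binom{n-1}{j}$, $\binom{n+j}{j}=\frac{n+1}{n+j+1}\binom{n+j+1}{j}$ and the simplification $(4j+2)(n-1)+4j+3=(4j+2)n+1$ give
\begin{equation*}
	\binom{2j}{j}\,R(n-1)=\frac{(n-1-j)(n+1)\big((4j+2)n+1\big)}{n+j+1}\binom{n-1}{j}\binom{n+j+1}{j}.
\end{equation*}
Since $\binom{n-1}{j}\binom{n+j+1}{j}\neq 0$ for $n>j$, cancelling the common factor turns $T(n)=R(n)+R(n-1)$ into the polynomial identity
\begin{equation*}
	(4n^2+2n-1)\big((2j+1)n+j+1\big)=(n+j+1)n\big((4j+2)n+4j+3\big)+(n-1-j)(n+1)\big((4j+2)n+1\big),
\end{equation*}
and expanding both sides shows each equals $(8j+4)n^3+(8j+6)n^2+n-(j+1)$, which completes the induction.

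The only step requiring care is the chain of binomial manipulations that puts $\binom{n-1}{j}\binom{n+j+1}{j}$ in evidence on both sides of $T(n)=R(n)+R(n-1)$; after that it is a cubic polynomial identity, checked by comparing coefficients. (Equivalently, since $(-1)^kT(k)$ is a hypergeometric term in $k$ and $R(n)$ has the expected shape, the identity can be produced by Gosper's algorithm or creative telescoping, as elsewhere in the paper; the induction above simply keeps everything parallel to Lemma~\ref{lem: (-1)^{n-k}k(k-j)}.)
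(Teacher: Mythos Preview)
Your proof is correct and follows the same induction on $n$ as the paper's proof of this lemma. Your organization of the inductive step---factoring out $\binom{n-1}{j}\binom{n+j+1}{j}$ and reducing $T(n)=R(n)+R(n-1)$ to the explicit cubic polynomial identity in $n$---is slightly more transparent than the paper's direct factorial manipulation, but the argument is the same.
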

\noindent\emph{Proof.}
When $n=j+1$, we have 
\begin{align*}
	& \sum_{k=j+1}^n(-1)^{n-k}(4k^2+2k-1)\binom{k+j}{2j}\left(2j+1-j(j+1)\frac{2k+1}{k(k+1)}\right) \notag\\[7pt]
	={} & \frac{2j+2}{j+2}(4j^2+10j+5)(2j+1) \notag\\[7pt]
	={} & \frac{j+1}{\binom{2j}j}\big((4j+2)(j+1)+4j+3\big)\binom{2j+2}j  \notag\\[7pt]
	={} & \frac{n}{\binom{2j}j}\big((4j+2)n+4j+3\big)\binom{n-1}j\binom{n+j+1}j \notag\\[7pt]
	={} & \frac{1}{C_j}w(n,j+1)\ n\ \big((4j+2)n+4j+3\big)\text{.}
\end{align*}
Now assume that (\ref{eq: (-1)^{n-k}(4k^2+2k-1)}) holds for a fixed positive integer $n=m (j+1\leq m \in\bZ)$. Then for $n=m+1$,
\begin{align*}
	& \sum_{k=j+1}^{n}(-1)^{n-k}(4k^2+2k-1)\binom{k+j}{2j}\left(2j+1-j(j+1)\frac{2k+1}{k(k+1)}\right) \notag\\[7pt]
	={} & \big(4(m+1)^2+2(m+1)-1\big)\binom{m+j+1}{2j}\left(2j+1-j(j+1)\frac{2m+3}{(m+1)(m+2)}\right) \notag\\[7pt]
	{} & -\sum_{k=j+1}^m(-1)^{m-k}(4k^2+2k-1)\binom{k+j}{2j}\left(2j+1-j(j+1)\frac{2k+1}{k(k+1)}\right) \notag\\[7pt]
	={} & \frac{1}{(m+1)(m+2)}\binom{m+j+1}{2j}(m-j-1)(2mj+m+3j+2)(4m^2+m+3j+5) \notag\\[7pt]
	{} & -\frac{n}{\binom{2j}j}\big((4j+2)m+4j+3\big)\binom{m+j+1}j\binom{m-1}j \notag\\[7pt]
	={} & \frac{1}{(m+1)(m+2)}\ \frac{(m+j+1)!}{(2j)!(m-j)!}(m+1)(m+j+2)(4mj+2m+8j+5) \notag\\[7pt]
	={} & \frac{m+1}{\binom{2j}j}\big((4j+2)(m+1)+4j+3\big)\binom{m}j\binom{m+j+2}j \notag\\[7pt]
	={} & \frac{1}{C_j}w(n,j+1)\ n\ \big((4j+2)n+4j+3\big)\text{.}
\end{align*}
According to induction, equality \eqref{eq: (-1)^{n-k}(4k^2+2k-1)} holds.
\qed

\noindent\emph{Another proof of Theorem \ref{th:2022sun Remark 5.3(1)}.}
 Let $\mathcal{A}_n=\frac2{3n(n+1)}\sum_{k=1}^n(-1)^{n-k}k^2D_kD_{k-1}$, then by Lemma \ref{lem: nD_nD_{n-1}} and \ref{lem: (-1)^{n-k}k(k-j)} we have
\begin{align}\label{eq: A_n}
	\mathcal{A}_n &= \frac2{3n(n+1)}\sum_{k=1}^n(-1)^{n-k}k\left(3\sum_{j=0}^{k-1}(k-j)\binom{k+j}{2j}{\binom{2j}j}^22^j\right) \notag\\[4pt]
	&=\frac2{n(n+1)}\sum_{j=0}^{n-1}{\binom{2j}j}^22^j\left(\sum_{k=j+1}^n(-1)^{n-k}k(k-j)\binom{k+j}{2j}\right)\notag\\[4pt]
	&=\frac1{n(n+1)}\sum_{j=0}^{n-1}\binom{2j}j2^j\binom{n+j+1}{j}\binom{n-1}j\ n\ (n+1)\notag\\[4pt]
	&=\sum_{j=0}^{n-1}\binom{2j}j\binom{n+j+1}j\binom{n-1}j2^j\\
	&=1+\sum_{j=1}^{n-1}\binom{2j}j\binom{n+j+1}j\binom{n-1}j2^j \notag\\
	&\equiv \ 1 \pmod{2}\text{.}\notag
\end{align}
 Let $\mathcal{B}_n=\frac 1n\sum_{k=1}^n (-1)^{n-k}(4k^2+2k-1)D_{k-1}s_k$, then by Lemma \ref{lem: D_{n-1}s_{n}} and \ref{lem: (-1)^{n-k}(4k^2+2k-1)} we have
\begin{align}\label{eq: B_n}
	\mathcal{B}_n &= \frac1n\sum_{k=1}^n(-1)^{n-k}(4k^2+2k-1)\sum_{j=0}^{n-1}\binom{n+j}{2j}{C_j}^2\left( 2j+1-j(j+1)\frac{2n+1}{n(n+1)} \right)2^j\notag\\[4pt]
	&=\frac1n\sum_{j=0}^{n-1}{C_j}^22^j\sum_{k=j+1}^n(-1)^{n-k}(4k^2+2k-1)\binom{k+j}{2j}\left(2j+1-j(j+1)\frac{2k+1}{k(k+1)}\right)\notag\\[4pt]
	&=\frac1n\sum_{j=0}^{n-1}{C_j}^22^j\frac1{C_j}w(n,j+1)\ n\ \big((4j+2)n+4j+3\big)\notag\\[4pt]
	&=\sum_{j=0}^{n-1}C_jw(n,j+1)\big((4j+2)n+4j+3\big)2^j\\[4pt]
	&=2n+3+\sum_{j=1}^{n-1}C_jw(n,j+1)\big((4j+2)n+4j+3\big)2^j\notag\\[4pt]
	&\equiv \ 1  \pmod{2}\text{.}\quad\quad\quad\quad\quad\quad\quad\quad\quad\quad\quad\quad\quad\quad\quad\quad\quad\quad\quad\quad\quad\quad\quad\quad\quad\quad\quad\notag
\end{align}
This concludes the proof.
\qed

Now we provide another proof of Theorem \ref{th:2022sun Remark 5.3(2)} with following lemmas.
\begin{lem}\label{lem: (4j+3)C_j}
For any $j \in \mathbb{N}$, we have 
	\[(4j+3)C_j\ =\ 2\binom{2j}j+\binom{2j+1}{j+1}\text{.}\]
\end{lem}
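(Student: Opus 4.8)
\noindent\emph{Proof proposal.}
The plan is to reduce both sides to the same scalar multiple of the central binomial coefficient $\binom{2j}{j}$, after which the identity becomes an immediate comparison. First I would rewrite the left-hand side using the definition of the Catalan number, $C_j=\binom{2j}{j}/(j+1)$, so that
\[
(4j+3)C_j=\frac{4j+3}{j+1}\binom{2j}{j}\text{.}
\]
This is the target form; it remains to bring the right-hand side to the same shape.

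Next I would simplify the term $\binom{2j+1}{j+1}$ on the right-hand side. The key elementary fact is
\[
\binom{2j+1}{j+1}=\binom{2j+1}{j}=\frac{(2j+1)!}{j!\,(j+1)!}=\frac{2j+1}{j+1}\binom{2j}{j}\text{,}
\]
which one can also derive from Pascal's rule via $\binom{2j+1}{j+1}=\binom{2j}{j}+\binom{2j}{j-1}$ together with $\binom{2j}{j-1}=\tfrac{j}{j+1}\binom{2j}{j}$. Substituting this into $2\binom{2j}{j}+\binom{2j+1}{j+1}$ gives
\[
2\binom{2j}{j}+\binom{2j+1}{j+1}=\Bigl(2+\frac{2j+1}{j+1}\Bigr)\binom{2j}{j}=\frac{2(j+1)+(2j+1)}{j+1}\binom{2j}{j}=\frac{4j+3}{j+1}\binom{2j}{j}\text{,}
\]
which coincides with the expression obtained for $(4j+3)C_j$, completing the proof.

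There is essentially no obstacle in this lemma; it is a one-line factorial computation. The only point requiring a moment's care is the normalization $\binom{2j+1}{j+1}=\tfrac{2j+1}{j+1}\binom{2j}{j}$, and if one preferred to avoid even that, the statement could instead be checked by a trivial induction on $j$ (the inductive step again reducing to Pascal's rule) or read off from the generating-function identity $\sum_{j\ge0}\binom{2j}{j}x^j=(1-4x)^{-1/2}$ and $\sum_{j\ge0}C_jx^j=(1-\sqrt{1-4x})/(2x)$. The direct computation above is the shortest route, and it is the one I would present. This lemma is then intended to be used to rewrite the summand in $\mathcal{B}_n$ (see \eqref{eq: B_n}) so that the $p^2$-congruence of Theorem \ref{th:2022sun Remark 5.3(2)} can be analyzed term by term.
\qed
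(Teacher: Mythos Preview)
Your proof is correct and follows essentially the same route as the paper: a direct factorial manipulation reducing everything to multiples of $\binom{2j}{j}$. The only cosmetic difference is that the paper splits $4j+3=(2j+2)+(2j+1)$ and expands $(4j+3)C_j$ forward into $2\binom{2j}{j}+\binom{2j+1}{j+1}$, whereas you bring both sides to the common form $\tfrac{4j+3}{j+1}\binom{2j}{j}$; the underlying computation is identical.
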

\noindent\emph{Proof.} Clearly,
 \begin{align*}
	(4j+3)C_j&=\frac{(2j)!}{j!(j+1)!}\big((2j+2)+(2j+1)\big) \notag\\[6pt]
	&=2\frac{(2j)!}{j!j!}+\frac{(2j+1)!}{j!(j+1)!}=2\binom{2j}j+\binom{2j+1}{j+1}.
\end{align*}
\qed

\begin{lem}\label{lem: sum}
	For any prime $p>3$,
	\begin{align*}
			\sum_{j=0}^{p-2}\binom{2j}j(-2)^j &\equiv\ 1\ \pmod{p}\text{,}\\
		\sum_{j=0}^{p-2}\binom{2j+1}{j+1}(-2)^j &\equiv\ 0\ \pmod{p}\text{,}\\
		\sum_{j=0}^{p-2}\binom{2j}j\ j\ (-2)^j &\equiv\ -\frac49\ \pmod{p}\text{.}
	\end{align*}
\end{lem}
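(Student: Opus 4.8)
\emph{Proof strategy for Lemma~\ref{lem: sum}.}
The plan is to reduce all three truncated sums to evaluations of $(1+y)^{(p-1)/2}$ by means of the classical congruence
\[
\binom{2j}{j}\ \equiv\ (-4)^{j}\binom{(p-1)/2}{j}\pmod{p}\qquad(0\le j\le p-1),
\]
which follows from the rational identity $\binom{2j}{j}=(-4)^{j}\binom{-1/2}{j}$ together with $-\frac12\equiv\frac{p-1}{2}\pmod p$ and the invertibility of $j!$ modulo $p$ for $j\le p-1$. Writing $q=\frac{p-1}{2}$, the single ingredient doing all the work is $9^{q}=3^{p-1}\equiv1\pmod p$ (the hypothesis $p>3$ being used to invert $3$ and $9$ modulo $p$). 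I will also use, via Lucas' theorem, that $\binom{2p-2}{p-1}\equiv0\pmod p$, so that in any sum carrying the factor $\binom{2j}{j}$ the omitted term $j=p-1$ is $\equiv0\pmod p$; hence $\sum_{j=0}^{p-2}$ may be replaced by $\sum_{j=0}^{p-1}$ without changing residues.

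For the first sum, substituting the displayed relation and using $\binom{q}{j}=0$ for $j>q$ gives $\sum_{j=0}^{p-1}\binom{2j}{j}(-2)^{j}\equiv\sum_{j=0}^{q}\binom{q}{j}8^{j}=9^{q}\equiv1\pmod p$. For the second, I would rewrite $\binom{2j+1}{j+1}=\frac12\binom{2j+2}{j+1}$ and reindex by $i=j+1$ to obtain $\sum_{j=0}^{p-2}\binom{2j+1}{j+1}(-2)^{j}=-\frac14\sum_{i=1}^{p-1}\binom{2i}{i}(-2)^{i}=-\frac14\Bigl(\sum_{i=0}^{p-1}\binom{2i}{i}(-2)^{i}-1\Bigr)\equiv-\frac14(1-1)=0\pmod p$. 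For the third, I would use the differentiated binomial theorem $\sum_{j}j\binom{q}{j}y^{j}=qy(1+y)^{q-1}$ with $y=8$: $\sum_{j=0}^{p-1}j\binom{2j}{j}(-2)^{j}\equiv\sum_{j=0}^{q}j\binom{q}{j}8^{j}=8q\cdot9^{q-1}$, and since $8q=4(p-1)\equiv-4\pmod p$ while $9^{q-1}=9^{q}/9\equiv1/9\pmod p$, this equals $-\frac49\pmod p$.

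I do not anticipate a genuine obstacle here. The only points requiring care are: verifying that $\binom{2j}{j}\equiv(-4)^{j}\binom{(p-1)/2}{j}$ holds uniformly on the entire range $0\le j\le p-1$ (so that truncating the right-hand side at $j=q$ is lossless, the integers $\binom{q}{j}$ with $q<j\le p-1$ being honestly zero), and checking that each discarded term $j=p-1$ is divisible by $p$. Both are routine given $p>3$. As an alternative, one could instead quote known evaluations of $\sum_{k=0}^{p-1}\binom{2k}{k}m^{k}\pmod p$ from the literature on supercongruences, but the self-contained argument above is short enough to include directly.
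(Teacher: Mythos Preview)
Your proposal is correct and follows essentially the same approach as the paper: both arguments rest on $\binom{2j}{j}(-2)^j\equiv\binom{(p-1)/2}{j}8^j\pmod p$ and then evaluate $9^{(p-1)/2}\equiv1$. The only cosmetic difference is in the third congruence, where the paper reindexes via $j\binom{2j}{j}(-2)^j$ (shifting $j\mapsto j-1$) to reach $(-4)\sum_{j}\binom{-3/2}{j}8^j\equiv(-4)\,9^{(p-3)/2}$, while you apply the differentiated binomial identity $\sum_j j\binom{q}{j}8^j=8q\cdot 9^{q-1}$ directly; the two computations land on the same value $-4\cdot 9^{-1}$.
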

\noindent\emph{Proof.}
	For any prime $p>3$, 
\begin{align*}
	\sum_{j=0}^{p-2}\binom{2j}j(-2)^j &\equiv \sum_{j=0}^{\frac{p-1}2}\binom{2j}j(-2)^j = \sum_{j=0}^{\frac{p-1}2}\binom{-\frac12}{j}8^j\\
	&\equiv \sum_{j=0}^{\frac{p-1}2}\binom{\frac{p-1}2}{j}8^j = (1+8)^{\frac{p-1}2} = 3^{p-1}\\
	&\equiv 1\pmod{p}\text{,}\\
	\sum_{j=0}^{p-2}\binom{2j+1}{j+1}(-2)^j &= \frac12\sum_{j=1}^{p-1}\binom{2j}j(-2)^{j-1} = -\frac14\sum_{j=1}^{p-1}\binom{2j}j(-2)^j\\[6pt]
	&\equiv -\frac14\left(\sum_{j=0}^{p-2}\binom{2j}j(-2)^j-1\right)\\
	&\equiv 0\pmod{p}\text{,}\\
	\sum_{j=0}^{p-2}\binom{2j}j\ j\ (-2)^j &=(-4)\sum_{j=0}^{p-3}\binom{-\frac23}j8^j \equiv(-4)\sum_{j=0}^{\frac{p-3}2}\binom{-\frac23}j8^j\\
	&\equiv(-4)\sum_{j=0}^{\frac{p-3}2}\binom{\frac{p-3}2}j8^j =(-4)(1+8)^{\frac{p-3}2}=(-4)3^{p-3}\\
	&\equiv -\frac49\pmod{p}\text{.}
\end{align*}
This ends the proof.
\qed

\noindent\emph{Another proof of Theorem \ref{th:2022sun Remark 5.3(2)}.}
For any prime $p>3$, let $\mathcal{A}_{p-1}=\frac2{3(p-1)p}\sum_{k=1}^{p-1}(-1)^kk^2D_kD_{k-1}$ and $\mathcal{B}_{p}=\frac1p\sum_{k=1}^p(-1)^{k+1}(4k^2+2k-1)D_{k-1}s_k$. In view of equality \eqref{eq: A_n} and \eqref{eq: B_n}, we have 
\begin{equation*}
	\mathcal{A}_{p-1} = \sum_{j=0}^{p-2}\binom{2j}j\binom{p+j}j\binom{p-2}j2^j\quad\text{and}\quad\mathcal{B}_{p}= \sum_{j=0}^{p-1}C_jw(p,j+1)\big((4j+2)p+4j+3\big)2^j.
\end{equation*}
By Lemmas \ref{lem: (4j+3)C_j} and \ref{lem: sum} it suffices to
show that
\begin{align*}
	\mathcal{A}_{p-1} 
	&\equiv \sum_{j=0}^{p-2}\binom{2j}j(j+1)(-2)^j\\[6pt]
	&\equiv \sum_{j=0}^{p-2}\binom{2j}j\ j\ (-2)^j\ +\ \sum_{j=0}^{p-2}\binom{2j}j(-2)^j\\[6pt]
	&\equiv -\frac49+1 \equiv \frac59\pmod{p}
\end{align*}
and
\begin{align*}
	\mathcal{B}_{p} 
	&\equiv 2+\sum_{j=0}^{p-2}C_j(4j+3)\left(\frac1{j+1} \binom{p-1}{j}\binom{p+j+1}{j}\right)2^j\\
	&\equiv 2+\sum_{j=0}^{p-2}C_j(4j+3)\left(\frac1{j+1} \binom{p-1}{j}\binom{j+1}{j}\right)2^j\\
	&= 2+\sum_{j=0}^{p-2}\left(2\binom{2j}j+\binom{2j+1}{j+1}\right)(-2)^j\\
	&\equiv 4\pmod{p}\text{.}
\end{align*}
Thus, we have 
\begin{equation*}
\sum_{k=1}^{p-1}(-1)^kk^2D_kD_{k-1}= \frac23p(p-1)\mathcal{A}_{p-1} \equiv -\frac23p\mathcal{A}_{p-1} \equiv\ -\frac56p\pmod{p^2}
\end{equation*}
and
\begin{equation*}
\sum_{k=1}^p (-1)^k(4k^2+2k-1)D_{k-1}s_k=-p\mathcal{B}_p \equiv\ -4p\pmod{p^2}.
\end{equation*}
The proof of Theorem \ref{th:2022sun Remark 5.3(2)} is now complete.
\qed

\section{Proof of Theorem \ref{th:2022sun}}
From the properties of $s_{n}(x)$, we can obtain the following result.
\begin{lem}\label{lem:x(1+x)}
	For any $n\in\bZ^+$, we have
	\begin{equation*}
		\frac{s_{n+1}(x)-(1+2x)s_n(x)}{x^2+x}\in\bZ[x] \quad and \quad	\frac{s_{n}(x)-(1+2x)s_{n+1}(x)}{x^2+x}\in\bZ[x].
	\end{equation*}
\end{lem}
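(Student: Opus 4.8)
The plan is to prove the sharper statement that each of the two numerators vanishes at $x=0$ and at $x=-1$, and then to invoke the elementary observation that, since $x(x+1)$ is monic and $x,x+1$ are coprime in $\bZ[x]$, a polynomial $f(x)\in\bZ[x]$ is divisible by $x(x+1)$ in $\bZ[x]$ precisely when $f(0)=0$ and $f(-1)=0$. Indeed, $f(0)=0$ says the constant term of $f$ vanishes, so $f(x)=x\,g(x)$ with $g(x)\in\bZ[x]$; then $0=f(-1)=-g(-1)$ forces $(x+1)\mid g(x)$ in $\bZ[x]$, hence $f(x)=x(x+1)h(x)$ with $h(x)\in\bZ[x]$. (This is also why the quotients in the lemma land in $\bZ[x]$ rather than merely in $\bQ[x]$.)

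Next I would evaluate the little Schröder polynomials at these two points directly from the definition $s_n(x)=\sum_{k=1}^n N(n,k)\,x^{k-1}(x+1)^{n-k}$, where $N(n,k)=\frac1n\binom nk\binom n{k-1}$. At $x=0$ every summand with $k\geq 2$ carries the factor $x^{k-1}$ and so vanishes, leaving only the $k=1$ term, whence $s_n(0)=N(n,1)=1$. At $x=-1$ every summand with $k\leq n-1$ carries the factor $(x+1)^{n-k}$ and so vanishes, leaving only the $k=n$ term, whence $s_n(-1)=N(n,n)\,(-1)^{n-1}=(-1)^{n-1}$.

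Finally I would substitute these values. For $f(x)=s_{n+1}(x)-(1+2x)s_n(x)$ we get $f(0)=1-1\cdot1=0$ and, using that $1+2x$ equals $-1$ at $x=-1$, $f(-1)=(-1)^{n}-(-1)\cdot(-1)^{n-1}=(-1)^{n}+(-1)^{n-1}=0$. Likewise for $g(x)=s_n(x)-(1+2x)s_{n+1}(x)$ we get $g(0)=1-1\cdot1=0$ and $g(-1)=(-1)^{n-1}-(-1)\cdot(-1)^{n}=(-1)^{n-1}+(-1)^{n}=0$. By the first paragraph, $x(x+1)$ divides $f(x)$ and $g(x)$ in $\bZ[x]$, which is exactly the assertion of the lemma. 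As a by-product the same computation yields $s_n(x)\equiv 1+\bigl(1+(-1)^n\bigr)x\pmod{x(x+1)}$ — that is, $s_n(x)\equiv 1+2x$ for $n$ even and $s_n(x)\equiv 1$ for $n$ odd — which is likely the convenient form to carry into the proof of Theorem \ref{th:2022sun}.

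I do not anticipate a genuine obstacle here: the only points needing care are the integrality of the quotient (handled for free by monicity of $x(x+1)$) and the correct bookkeeping of the boundary terms $k=1$ and $k=n$ in the defining sum. One could alternatively route the argument through the identity \eqref{eq: (x+1)s_{n}(x)} together with the Delannoy recurrence underlying \eqref{eq: D_{n+1}(x)-D_{n-1}(x)}, but the direct two-point evaluation above is the shortest path.
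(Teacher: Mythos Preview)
Your argument is correct. Evaluating $s_n(x)$ at the two roots of $x(x+1)$ and using that a monic polynomial divides $f\in\bZ[x]$ in $\bZ[x]$ exactly when it divides in $\bQ[x]$ is a clean way to settle the lemma, and your bookkeeping of the boundary terms $k=1$ and $k=n$ is right.

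The paper takes a different, more hands-on route: it peels off the $k=1$ and top terms from the defining sum for each of $s_{n+1}(x)$ and $s_n(x)$, observes that every interior summand already carries both a factor of $x$ and a factor of $x+1$, and then checks by direct simplification that the four boundary pieces combine to $-(x^2+x)\bigl((x+1)^{n-2}+x^{n-2}\bigr)$ (and analogously for the second quotient). In effect the paper produces an explicit expression for the quotient polynomial, whereas your argument certifies existence without writing it down. Your approach is shorter and more conceptual; the paper's has the minor advantage of yielding the quotient explicitly, though that explicit form is not used anywhere downstream. Either proof suffices for the application to Theorem~\ref{th:2022sun}.
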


\noindent\emph{Proof.}
When $n=1$, $s_2(x)-(1+2x)s_1(x)=0$.
For $n\geq2$, 
\begin{align*}
	&s_{n+1}(x)-(1+2x)s_{n}(x)\\
	=&\sum_{k=1}^{n+1}N(n+1,k)x^{k-1}(x+1)^{n+1-k}-(1+2x)\sum_{k=1}^{n}N(n,k)x^{k-1}(x+1)^{n-k}\\
	=&(x+1)^n-(1+2x)(x+1)^{n-1}+x^n-(1+2x)x^{n-1}\\
	&+\sum_{k=2}^{n}N(n+1,k)x^{k-1}(x+1)^{n+1-k}-(1+2x)\sum_{k=2}^{n-1}N(n,k)x^{k-1}(x+1)^{n-k}\\
	=&(x^2+x)\left( \sum_{k=2}^{n}N(n+1,k)x^{k-2}(x+1)^{n-k}-(1+2x)\sum_{k=2}^{n-1}N(n,k)x^{k-2}(x+1)^{n-1-k}\right)\\ &-(x^2+x)\Bigl( x^{n-2}+(x+1)^{n-2}\Bigr)
\end{align*}
Similarly, when $n=1$, $s_{1}(x)-(1+2x)s_{2}(x)=-4(x^2+x)$. For $n\geq2$, 
\begin{align*}
	&s_{n}-(1+2x)s_{n+1}\\
	=&\sum_{k=1}^{n}N(n,k)x^{k-1}(x+1)^{n-k}-(1+2x)\sum_{k=1}^{n+1}N(n+1,k)x^{k-1}(x+1)^{n+1-k}\\
	=&(x+1)^{n-1}-(1+2x)(x+1)^{n}+x^{n-1}-(1+2x)x^{n}\\
	&+\sum_{k=2}^{n-1}N(n,k)x^{k-1}(x+1)^{n-k}-(1+2x)\sum_{k=2}^{n}N(n+1,k)x^{k-1}(x+1)^{n+1-k}\\
	=&(x^2+x)\left( \sum_{k=2}^{n-1}N(n,k)x^{k-2}(x+1)^{n-1-k}-(1+2x)\sum_{k=2}^{n}N(n+1,k)x^{k-2}(x+1)^{n-k}\right)\\ &-(x^2+x)\Big((2x+1)x^{n-2}+(2x+3)(x+1)^{n-2}\Big) .
\end{align*}
The proof of Lemma \ref{lem:x(1+x)} is now complete.
\qed

 In 2018, Sun\cite{Sun2018} showed
\begin{equation}\label{eq: (1+2x)^3}
	s_{n+1}(x)=\sum\limits_{k=0}^{\lfloor\frac{n}{2} \rfloor}\binom{n}{2k}C_{k}(2x+1)^{n-2k}(x(x+1))^k.
\end{equation}
Then we obtain the following lemma.
\begin{lem}\label{lem: (1+2x)^{3}}
	For any even integer $n\in\bZ^+$, we have
	\begin{equation*}
		\frac{(1+2x)(2+n)s_{n+1}(x)^2+s_{n}(x)s_{n+1}(x)}{(1+2x)^3}\in\bZ[x].
	\end{equation*}
\end{lem}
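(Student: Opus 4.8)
\medskip
\noindent\emph{Proof (sketch).}
The plan is to clear the factor $s_{n+1}(x)$ from the numerator and then reduce the claim to a divisibility that can be verified by a low--order expansion at $x=-1/2$. Write the numerator as
\[
(1+2x)(2+n)s_{n+1}(x)^2+s_n(x)s_{n+1}(x)=s_{n+1}(x)\,g(x),\qquad g(x):=(n+2)(1+2x)s_{n+1}(x)+s_n(x).
\]
Putting $x=-1/2$ in \eqref{eq: (1+2x)^3} annihilates every summand except the one with $k=n/2$ (recall $n$ is even), so $s_{n+1}(-1/2)=C_{n/2}(-1/4)^{n/2}\neq 0$. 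Hence the primitive, irreducible polynomial $2x+1$ does not divide $s_{n+1}(x)$ in $\bZ[x]$, and by Gauss's lemma the lemma is equivalent to $(2x+1)^3\mid g(x)$ in $\bZ[x]$; since $(2x+1)^3$ is primitive, this is in turn equivalent to $(2x+1)^3\mid g(x)$ in $\bQ[x]$, i.e.\ to $g(-1/2)=g'(-1/2)=g''(-1/2)=0$.

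The key step is a parity observation. Put $u=2x+1$, so that $x(x+1)=(u^2-1)/4$. Reading off \eqref{eq: (1+2x)^3} with $n$ even, each summand of $s_{n+1}(x)$ is $\binom{n}{2k}C_k\,u^{\,n-2k}\bigl((u^2-1)/4\bigr)^k$, an even power of $u$ times a polynomial in $u^2$; thus $s_{n+1}(x)$ is a polynomial in $u^2$. Applying \eqref{eq: (1+2x)^3} with $n$ replaced by the odd integer $n-1$ (so that $s_n(x)=s_{(n-1)+1}(x)$), the same reasoning shows $s_n(x)$ is $u$ times a polynomial in $u^2$. Consequently $g(x)=(n+2)u\,s_{n+1}(x)+s_n(x)$ is an odd polynomial in $u$, so its $u^0$-- and $u^2$--coefficients vanish automatically, and $(2x+1)^3\mid g(x)$ reduces to the single statement that the coefficient of $u^1$ in $g$ is $0$.

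It remains to compute that coefficient. From \eqref{eq: (1+2x)^3}, the $u^0$--coefficient of $s_{n+1}(x)$ comes only from $k=n/2$ and equals $C_{n/2}(-1/4)^{n/2}$, while the $u^1$--coefficient of $s_n(x)$ comes only from $k=(n-2)/2$ and equals $(n-1)C_{(n-2)/2}(-1/4)^{(n-2)/2}$. Writing $m=n/2$, the $u^1$--coefficient of $g$ is
\[
(n+2)C_m(-1/4)^m+(n-1)C_{m-1}(-1/4)^{m-1}=(-1/4)^{m-1}\left(-\frac{m+1}{2}\,C_m+(2m-1)\,C_{m-1}\right),
\]
which is $0$ because the Catalan numbers satisfy $C_m=\dfrac{2(2m-1)}{m+1}\,C_{m-1}$. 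Hence $(2x+1)^3\mid g(x)$, and the lemma follows. I expect the substantive points to be the two Gauss's--lemma reductions (both hinging on $s_{n+1}(-1/2)\neq 0$) and the recognition of the even/odd structure in $u=2x+1$; once that parity is in hand, the remaining computation is a one--line Catalan identity. (Lemma~\ref{lem:x(1+x)} is not strictly needed here, although it could be used in place of the explicit low--order expansion of $s_n(x)$ near $x=-1/2$.)
\qed
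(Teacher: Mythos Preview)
Your proof is correct. Both your argument and the paper's rest on the same two ingredients---the expansion \eqref{eq: (1+2x)^3} and the Catalan recurrence $C_m=\dfrac{2(2m-1)}{m+1}C_{m-1}$---but the organization differs. The paper does not factor out $s_{n+1}(x)$; it reduces the full numerator modulo $(1+2x)^3$ directly, replacing each of $s_{n+1}$ and $s_n$ by its lowest-order term in $(2x+1)$ from \eqref{eq: (1+2x)^3}, then factoring out $(1+2x)(x^2+x)^{n-1}C_{n/2}$ and recognizing the bracket $(n+2)(x^2+x)C_{n/2}+(n-1)C_{(n-2)/2}$ as $\binom{n}{n/2}\,(2x+1)^2/2$ via the same Catalan identity. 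Your route---strip off $s_{n+1}$ using $s_{n+1}(-1/2)\neq0$ and Gauss's lemma, then invoke the even/odd parity in $u=2x+1$ to kill two of the three vanishing conditions automatically---is a tidy conceptual shortcut the paper does not use; it reduces the symbolic work to a single coefficient. The paper's direct congruence computation, on the other hand, stays entirely inside $\bZ[x]$ and avoids the passage through $\bQ[x]$ and primitivity.
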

\noindent\emph{Proof.}
When $n\in\bZ^+$ is a even, we clearly have
\begin{equation*}
	s_{n+1}(x)=\sum\limits_{k=0}^{\frac{n}{2}}\binom{n}{2k}C_{k}(2x+1)^{n-2k}(x(x+1))^k
\end{equation*}
and
\begin{equation*}
	s_{n}(x)=\sum\limits_{k=0}^{\frac{n-2}{2}}\binom{n-1}{2k}C_{k}(2x+1)^{n-1-2k}(x(x+1))^k
\end{equation*}
by \eqref{eq: (1+2x)^3}.
Therefore, we have
\begin{align*}
	&(1+2x)(2+n)s_{n+1}(x)^2+s_{n}(x)s_{n+1}(x) \\
	&\equiv (1+2x)(n+2)C_{\frac{n}{2}}^{2}(x^2+x)^n+(1+2x)(n-1)C_{\frac{n}{2}}C_{\frac{n-2}{2}}(x^2+x)^{n-1}\\
	&\equiv (1+2x)(x^2+x)^{n-1}C_{\frac{n}{2}}\Big( (n+2)(x^2+x)C_{\frac{n}{2}}+(n-1)C_{\frac{n-2}{2}}\Big)\\
	&\equiv 2(1+2x)(x^2+x)^{n-1}C_{\frac{n}{2}}\Bigg((x^2+x)\binom{n}{\frac{n}{2}}+(n-1)\frac{\binom{n-2}{\frac{n-2}{2}}}{n}\Bigg)\\
	&\equiv 2(1+2x)(x^2+x)^{n-1}C_{\frac{n}{2}}\binom{n}{\frac{n}{2}}(x^2+x+\frac{1}{4})\\
	&\equiv (1+2x)^3(x^2+x)^{n-1}C_{\frac{n}{2}}\binom{n-1}{\frac{n-2}{2}}\equiv 0 \pmod {(1+2x)^3}.
\end{align*}
This yields the desired Lemma \ref{lem: (1+2x)^{3}}.
\qed

\noindent\emph{Proof of Theorem \ref{th:2022sun}.}
Let $a_{k}=s_k(x)$ and $b_{k}=s_{k+1}(x)$. By Zeilberger’s algorithm\cite{Zeilberger1990}, we find that
\begin{equation}\label{ann:s_{k}(x)}
	(k+3)s_{k+2}(x)=(3+2k)(1+2x)s_{k+1}(x)-ks_{k}(x).
\end{equation}
By the telescoping method\cite{Hou2023}, we find that
\begin{equation*}
	-4(x^2+x)k(k+1)(k+2)a_{k}b_{k}=\Delta_{k}F_{k}
\end{equation*}
where
\begin{align*}
	F_{k}=\frac{1}{2}k(k+1)\Bigl(&(k+4)(k-1)a_{k-1}b_{k-1}-(1+2x)(k^2+7k+4)a_{k}b_{k-1}\notag\\
	&-(1+2x)(k+2)(k-1)a_{k-1}b_{k}+(k+2)(k+3)a_{k}b_{k}\Bigr).
\end{align*}
As $F_1=0$, we derive
\begin{align}\label{eq: 4/(n+1)(n+2)}
	&\frac{4}{(n+1)(n+2)}\sum_{k=1}^{n}(x^2+x)k(k+1)(k+2)s_k(x)s_{k+1}(x)\notag\\
	=-\frac{1}{2}\Bigg(&s_{n+1}(x)\Big( (n^{2}+7n+12)s_{n+2}(x)-(n^2+9n+12)(1+2x)s_{n+1}(x)\Big)\notag\\
	&+ns_{n}(x)\Big((5+n)s_{n+1}(x)-(n+3)(1+2x)s_{n+2}(x)\Big)  \Bigg)\notag\\
	=-\frac{1}{2}\Bigg(&s_{n+1}(x)\Big( n(n+7)s_{n+2}(x)-n(n+9)(1+2x)s_{n+1}(x)\notag\\
	&+12\big(s_{n+2}(x)-(1+2x)s_{n+1}(x)\big)\Big)\notag\\
	&+ns_{n}(x)\Big((5+n)s_{n+1}(x)-(n+3)(1+2x)s_{n+2}(x)\Big)\Bigg)
\end{align}
By \eqref{ann:s_{k}(x)}, we have
\begin{equation*}
	12\Big( s_{n+2}(x)-(1+2x)s_{n+1}(x)\Big)=n\Big(8(1+2x)s_{n+1}(x)-4s_{n+2}(x)-4s_{n}(x) \Big) .
\end{equation*}
Thus, by \eqref{eq: 4/(n+1)(n+2)} we deduce that
\begin{align}\label{eq: s_{k}(x)s_{k+1}(x)1}
	&\frac{4}{n(n+1)(n+2)}\sum_{k=1}^{n}(x^2+x)k(k+1)(k+2)s_k(x)s_{k+1}(x)\notag\\
	=-\frac{1}{2}\Bigg(&s_{n+1}(x)\Big( (n+3)s_{n+2}(x)-(n+1)(1+2x)s_{n+1}(x)-4s_{n}(x)\Big)\notag\\
	&+s_{n}(x)\Big((5+n)s_{n+1}(x)-(n+3)(1+2x)s_{n+2}(x)\Big)  \Bigg)\notag\\
	=-\frac{1}{2}\Bigg(&(n+3)s_{n+2}(x)\Big((s_{n+1}(x)-(1+2x)s_n(x)\Big)\notag\\
	&+(n+1)s_{n+1}(x)\Big(s_{n}(x)-(1+2x)s_{n+1}(x)\Big)\Bigg).
\end{align}
	  By combining \eqref{ann:s_{k}(x)} with \eqref{eq: s_{k}(x)s_{k+1}(x)1}, we  also derive
	  \begin{align}\label{eq: s_{k}(x)s_{k+1}(x)2}
	  	&\frac{4}{n(n+1)(n+2)}\sum_{k=1}^{n}(x^2+x)k(k+1)(k+2)s_k(x)s_{k+1}(x)\notag\\
	  	=-\frac{1}{2}\Bigg(&\Bigl((3+2n)(1+2x)s_{n+1}(x)-ns_{n}(x)\Bigr)\Bigl(s_{n+1}(x)-(1+2x)s_n(x)\Bigr)\notag\\
	  	&+(n+1)s_{n+1}(x)\Bigl(s_{n}(x)-(1+2x)s_{n+1}(x)\Bigr)\Bigg)\notag\\
	  	=-\frac{1}{2}\Big(&(1+2x)(2+n)s_{n+1}(x)^2+s_{n}(x)s_{n+1}(x)\notag\\
	  	&-(3+2n)(1+2x)^2s_n(x)s_{n+1}(x)+n(1+2x)s_n(x)^2\Big).
	  \end{align}
	  When $n\in\bZ^+$ is even, combining equation \eqref{eq: s_{k}(x)s_{k+1}(x)1} and \eqref{eq: s_{k}(x)s_{k+1}(x)2}  with Lemma \ref{lem:x(1+x)}, we clearly have
	  \begin{equation*}
	  	\frac{4}{n(n+1)(n+2)}\sum_{k=1}^{n}k(k+1)(k+2)s_k(x)s_{k+1}(x)\in\bZ[x].
	  \end{equation*}
 As $s_{2n}(x)/(1+2x)\in\bZ[x]$, we have
	 	\begin{equation*}
	 	\frac{-(3+2n)(1+2x)^2s_n(x)s_{n+1}(x)+n(1+2x)s_n(x)^2}{(1+2x)^3}\in\bZ[x]
	 \end{equation*}
	 when $n$ is even.

Since gcd$\big((x^2+x),(1+2x)^3\big)=1$, this completes the proof of Theorem \ref{th:2022sun} combining identity \eqref{eq: s_{k}(x)s_{k+1}(x)2} with Lemma \ref{lem: (1+2x)^{3}}.
\qed

\section* {Acknowledgements}
         
\quad The authors would like to thank Prof. Zhi-Wei Sun and Rong-Hua Wang for helpful comments on this paper. This work was supported by the Natural Science Foundation of Tianjin, China (No.
22JCQNJC00440).

\end{document}